\newtheorem{theorem}{Theorem}[section]
\newtheorem{lemma}[theorem]{Lemma}
\newtheorem{proposition}[theorem]{Proposition}
\newtheorem{corollary}[theorem]{Corollary}
\theoremstyle{definition}
\theoremstyle{remark}
\newtheorem{remark}[theorem]{Remark}
\numberwithin{equation}{section}
\renewcommand{\Re}{\operatorname{Re}}
\renewcommand{\Im}{\operatorname{Im}}
\def\l{{\lambda}}
\def\R{\mathbb{R}}
\def\CC{\mathbb{C}}
\def\N{\mathbb{N}}
\def\Q {\mathbb{Q}}
\def\ud{\textrm{ d}}
\newcommand*{\Relbarfill@}{\arrowfill@\Relbar\Relbar\Relbar} \newcommand*{\xeq}[2][]{\ext@arrow 0055\Relbarfill@{#1}{#2}} \makeatother
\def\C{{\mathbb C}}
\def\cD{{\mathcal D}}
\def\cL{{\mathcal L}}
\def\cR{{\mathcal R}}
\def\T{{T}}
\def\Q{{Q}}
\def\L{{L}}
\def\S{{S}}
\def\H{{H}}
\def\V{{V}}
\def\cC{{C}}
\def\a{\mathfrak{a}}
\def\b{\mathfrak{b}}
\def\s{\mathfrak{s}}
\def\t{\mathfrak{t}}
\def\B{{B}}
\newcommand{\dual}[1]{#1^\prime}
\newcommand{\cdual}[1]{{#1}{'}}
\newcommand{\bracket}[2]{\langle #1 , #2 \rangle}
\title[The Kato property of sectorial forms]{Note on the Kato property of sectorial forms}
\author{Ralph Chill}
\address{R.~Chill, Institut f\"ur Analysis, Fakult\"at Mathematik, Technische Universit\"at Dresden, 01062 Dresden, Germany}
\email{ralph.chill@tu-dresden.de}
\author{Sebastian Kr\'ol}
\address{S. Kr\'ol, Faculty of Mathematics and Computer Science, Adam Mickiewicz University Pozna{\'n}, ul. Uniwersytetu Pozna{\'n}skiego 4, 61-614 Pozna{\'n}, Poland}
\email{sebastian.krol@amu.edu.pl}
\begin{document}

\thanks{The second author was partially supported by the Alexander von Humboldt Foundation and NCN grant  UMO-2017/27/B/ST1/00078}

\thanks{}
\keywords{}

\date{\today}

\subjclass{46D05}  

\begin{abstract}
We characterise the Kato property of a sectorial form $\a$, defined on a Hilbert space $\V$,  with respect to a larger Hilbert space $\H$ in terms of two bounded, selfadjoint operators $\T$ and $\Q$ determined by the imaginary part of $\a$ and the embedding of $\V$ into $\H$, respectively. As a consequence, we show that if a bounded selfadjoint operator $\T$ on a Hilbert space $\V$ is in the Schatten class $S_p(\V)$ ($p\geq 1$), then the associated form $\a_\T(\cdot,  \cdot) := \bracket{(I+i\T)\cdot}{\cdot}_\V$ has the Kato property with respect to every Hilbert space $\H$ into which $\V$ is densely and continuously embedded. This result is in a sense sharp. Another result says that if $\T$ and $\Q$ commute then the  form $\a$ with respect to $\H$ possesses the Kato property. 
\end{abstract}

\renewcommand{\subjclassname}{\textup{2010} Mathematics Subject Classification}

\maketitle


\section{Introduction and preliminaries}

Let $\a : \V\times\V\to\CC$ be a bounded, sectorial, coercive, sesquilinear form on a complex Hilbert space $\V$, which is densely and continuously embedded into a second Hilbert space $\H$. Then $\a$ induces a sectorial,  invertible operator $\L_\H$ on $\H$, and Kato's square root problem is to know whether the domain of $\L_\H^{\frac12}$ is equal to the form domain $\V$. If this is the case, then we say that the couple $(\a ,\H )$ has the Kato property. In this short note we characterise the Kato property of $(\a ,\H )$ in terms of two bounded, selfadjoint operators $\T$, $\Q\in\cL (\V )$ determined by the imaginary part of $\a$ and by the embedding of $\V$ into $\H$, respectively. We show that the Kato property of $(\a, \H)$ is equivalent to the similarity of $\Q(I+i\T)^{-1}$ to an accretive operator, or to the similarity of $(I+\Q+i\T)(I-\Q+i\T)^{-1}$ to a contraction; see Theorem \ref{charact0}. The established link to different characterisations known in the literature provides an interesting connection between a variety of techniques and results mainly from operator theory of bounded operators, harmonic analysis, interpolation theory, or abstract evolution equations. 

In particular, we show that if a bounded, selfadjoint operator $\T$ on a Hilbert space $\V$ is in the Schatten class $S_p(\V)$ for some $p\geq 1$, then the associated form $\a_\T(\cdot, \cdot):=\bracket{(I+i\T)\cdot}{\cdot}_\V$ has the Kato property with respect to every Hilbert space $\H$ into which $\V$ is densely and continuously embedded; see Corollary \ref{main}. This result is in a sense sharp; see Proposition \ref{counterex}. 

On the other hand, if $\a$ is an arbitrary bounded, sectorial, coercive form on $\V$, then for every nonnegative, injective operator $\Q\in\cL (\V )$ which is of the form $I+P$ with $P\in S_p(\V )$ ($p\geq 1$), the pair $(\a ,\H_\Q)$ has the Kato property, where $\H_\Q$ is the completion of $\V$ with respect to the inner product $\bracket{\Q\cdot}{\cdot}_\V$; see Corollary \ref{cor 3}. Another straightforward consequence of Theorem \ref{charact0} says that for every pair $(\T ,\Q)$ of selfadjoint, commuting operators (with $\Q$ being nonnegative and injective), the form $\a_\T$ has the Kato property with respect to $\H_\Q$; see Corollary \ref{cor 1}.


We conclude this introduction with some preliminaries.

\subsection{Forms}
Let $\a$ be a bounded, sesquilinear form on a complex Hilbert space $\V$. Denote by $\a^*$ the {\em adjoint form} of $\a$, that is, $\a^*(u,v):=\overline{\a(v,u)}$
for every $u,v\in \V$. Then we call
\begin{align*}
\s:= \Re \a & := (\a + \a^*)/2 \quad \text{ and} \\
\t:= \Im \a & := (\a - \a^*)/2i 
\end{align*}
the {\em real part} and the {\em imaginary part} of $\a$, respectively. Note that $\s = \Re\a$ and $\t = \Im\a$ are symmetric forms on $\V$ and $\a=\s+i\t$. Throughout the following, we assume that $\a$ is coercive in the sense that $\Re\a (u,u) \geq \eta \, \| u\|_V^2$ for some $\eta >0$ and every $u\in \V$. This means that $\s=\Re\a$ is an equivalent inner product on $\V$, and for simplicity we assume that $\s$ is equal to the inner product on $\V$: $\s (u,v) = \bracket{u}{v}_\V$ ($u$, $v\in\V$). We shall also assume that $\a$ is {\it sectorial}, that is, there exists $\beta \geq 0$ such that
\begin{equation}\label{sectorial}
|\Im\a (u,u)|\leq \beta \, \Re\a (u,u), \quad u\in \V.
\end{equation}

Let $\H$ be a second Hilbert space such that $\V$ is densely and continuously embedded into $\H$,  that is, there exists a bounded, injective, linear operator $j:\V\to\H$ with dense range. In the sequel we identify $\V$ with $j(\V)$. The embedding $j$ induces a bounded, linear embedding $\cdual{j} : \H \to \dual{\V}$ (where $\dual{\V}$ is the space of bounded, antilinear functionals on $\V$) given by
\[
 \cdual{j} (u) := \bracket{u}{\cdot}_\H , \quad u\in\H . 
\]
Then we have the following picture:
\[
\V \xhookrightarrow{j\ } \H \xhookrightarrow{\cdual{j}} \dual{\V} \text{ and } \V \xhookrightarrow{\cdual{j}j} \dual{\V} .
\]
We write also $J := \cdual{j} j$ for the linear embedding of $\V$ into the dual space $\dual{\V}$. As usual, $\dual{\V}$ is equipped with the inner product $\bracket{u}{v}_{\dual{\V}} := \bracket{I_\V u}{I_\V v}_\V$, where $I_\V:\dual{\V}\rightarrow \V$ is the Riesz isomorphism.

\subsection{Bounded operators associated with the pair $(\a ,\H )$}
Let $(\a , \H)$ be given as above. We define two associated bounded, linear operators on $\V$. In fact, by the Riesz-Fr\'echet representation theorem, there exist two unique selfadjoint operators $\T = \T_\a$, $\Q = \Q_\H \in \cL (\V)$, such that
\begin{align}
\nonumber \t (u,v) & = \bracket{\T u}{v}_\V \text{ and} \\
\label{H and Q} \bracket{u}{v}_\H & = \bracket{\Q u}{v}_\V \text{ for every } u,\, v\in \V , 
\end{align}
and hence, by recalling our convention that $\s = \bracket{\cdot}{\cdot}_\V$,
\begin{equation} \label{a and T}
 \a (u,v) = \bracket{ (I+i\T )u}{v}_\V \text{ for every } u,\, v\in \V.
\end{equation}
Moreover, since $\bracket{\cdot}{\cdot}_\H$ is an inner product, $\Q$ is nonnegative and injective. In fact, $\Q = j^*j$, where $j^* : \H\to\V$ is the Hilbert space adjoint of $j$.  

Conversely, every selfadjoint operator $\T\in\cL (\V )$ induces via the equality \eqref{a and T} a bounded, sesquilinear, sectorial form $\a$ on $\V$ for which $\Re\a$ coincides with the inner product $\bracket{\cdot}{\cdot}_\V$, and for which $\Im\a$ is represented by $\T$. Similarly, every nonnegative, injective operator $\Q \in\cL (\V)$ induces via the equality \eqref{H and Q} an inner product $\bracket{\cdot}{\cdot}_\H := \bracket{\Q\cdot}{\cdot}_\V$ on $\V$, and thus, by taking the completion, a Hilbert space $\H_\Q$ into which $\V$ is densely and continuously embedded. 

We say that the pair of operators $(\T , \Q)$ is associated with the pair $( \a ,\H)$, or, conversely, the pair $(\a ,\H )$ is associated with the pair $(\T , \Q )$. 

\subsection{Unbounded operators associated with the pair $(\a ,\H )$}
Given a pair $(\a ,\H )$ as above, we define also associated closed, linear operators on $\H$ and $\dual{\V}$. 

First, we denote by $\L_\H := \L_{\a ,\H}$ the, in general, unbounded operator on $\H$ given by 
\begin{align*}
 \cD (\L_\H ) & := \{ u\in j(\V) : \exists f\in\H \, \forall v\in \V \, : \, \a (j^{-1}u,v) = \bracket{f}{jv}_\H \} , \\
 \L_\H u & := f .
\end{align*}
Second, we denote by $\L_{\dual{\V}} := \L_{\a ,\dual{\V}}$ the operator on $\dual{\V}$ which is given by 
\begin{align*}
\cD (\L_{\dual{\V}} ) & := (\cdual{j}j) (\V) = J(\V) , \\
\L_{\dual{\V}} u & := \a( J^{-1} u, \cdot) . 
\end{align*}
In a similar way we define the operators $\L_{\s ,\H}$ and $\L_{\s ,\dual \V}$ associated with the real part $\s = \Re\a$.

Recall that a closed, linear operator $(A,\cD (A))$ on a Banach space $X$ is called {\em sectorial of angle $\theta\in (0,\pi )$} if 
\[
 \sigma (A ) \subseteq \Sigma_\theta := \{ z\in\CC : |\arg z| \leq \theta \} ,
\]
and if for every $\theta'\in (\theta ,\pi )$ one has
\[
 \sup_{z\not\in\Sigma_{\theta'}} \| z R(z,A )\| < \infty . 
\]
We simply say that $A$ is {\em sectorial} if it is sectorial for some angle $\theta\in (0,\pi )$. The {\em numerical range} of a closed, linear operator $(A,\cD (A))$ on a Hilbert space $\H$ is the set
\[
 W(A ) := \{ \bracket{A u}{u}_\H : u\in\cD (A), \, \| u\|_\H = 1\} .
\]
The operator $A$ is said to be {\it $\theta$-accretive} for $\theta\in (0,\pi)$, if $W(A) \subseteq \Sigma_\theta$, that is, if
\[
  |\arg\bracket{Au}{u}_\H| \leq \theta \text{ for every } u\in\cD (A).
\]
If $\theta=\frac{\pi}{2}$, that is, $\Re\bracket{Au}{u}_\H \geq 0$ for every $u\in\cD (A)$, we say that $A$ is {\it accretive}.

Both operators $\L_\H$ and $\L_{\dual{\V}}$ defined above are sectorial for some angle $\theta \in (0,\frac{\pi}{2} )$. Since $\a$ is assumed to be coercive, we have $0\in\rho (\L_\H )$ and $0\in\rho (\L_{\dual{\V}} )$, that is, both $\L_\H$ and $\L_{\dual{\V}}$ are isomorphisms from their respective domains onto $\H$ and $\dual{\V}$, respectively; see e.g. \cite[Theorem 2.1, p. 58]{Ya10}.

It is easy to check that the numerical range of $\L_\H$ is contained in the sector $\Sigma_\theta$ with $\theta = \arctan \beta$ and in particular $\L_\H$ is $\theta$-accretive. As a consequence, by \cite[Theorem 11.13]{KuWe04}, $\L_\H$ admits a bounded $H^\infty$ functional calculus. We refer the reader to \cite{KuWe04} or \cite{Hs06} for the background on fractional powers and $H^\infty$ functional calculus of sectorial operators.

\section{Characterisations of the Kato property}
Let $(\a ,\H )$ be as above, that is, $\a$ is a bounded, sectorial, coercive, sesquilinear form on a Hilbert space $\V$ which embeds densely and continuously into a second Hilbert space $\H$. Let $\L_\H = \L_{\a,\H}$ be defined as above. We say that the couple $(\a, \H )$ has {\it Kato's
property} if $\cD(\L_\H^{1/2}) = \V$. By the Closed Graph Theorem, 
if  $(\a, \H )$ has the Kato property, then the norms $\|\L_\H^{1/2} \cdot\|_\H$ and $\|\cdot\|_{\V}$ are equivalent on $\V$. According to Kato \cite{Kat61} and Lions \cite{Lio62}, the coincidence of any two of the spaces $\cD( \L_\H^{1/2})$, $\cD( \L_\H^{*1/2})$ and $\V$  implies the coincidence of the three. 

Moreover, by Subsection 1.2, it is natural to say that a pair $(\T ,\Q )$ of selfadjoint, bounded operators on a Hilbert space $\V$, with $\Q$ being nonnegative and injective, has the Kato property, if the associated pair $(\a_\T, \H_\Q)$ has the Kato property, where $\a_\T (u,v) = \bracket{(I+i\T)u}{v}_\V$ ($u$, $v\in \V$) and $\H_\Q$ is the completion of $(\V , \bracket{\Q\cdot}{\cdot}_\V)$.  

The main result of this section is the following characterisation of the Kato property of $(\a,\H)$ in terms of the associated pair of bounded operators $(\T ,\Q)$.  

\begin{theorem}\label{charact0} 
Let $(\T , \Q)$ be the pair of operators associated with $(\a ,\H)$ as above. Then the following assertions are equivalent:
\begin{itemize}
 \item  [(i)] $(\a, \H )$ has the Kato property
 \item  [(ii)] There exists a positive operator $\S$ on $\V$ such that
\[
\bracket{\Q \S(I+i\T )u}{u}_\V  \in \Sigma_\theta \text{ for every } u\in \V \text{ and some } \theta<\frac{\pi}{2}.
\]
 \item  [(ii')] There exists a positive operator $\S$ on $\V$ such that
\[
\Re\, \bracket{\Q \S(I+i\T )u}{u}_\V \geq 0  \text{ for every } u\in \V.
\]
 \item [(iii)] There exists a positive operator $\S$ on $\V$ such that
\[
\bracket{\S(I-i\T )^{-1} \Q u}{u}_\V \in \Sigma_\theta \text{ for every } u\in \V \text{ and some } \theta<\frac{\pi}{2},
\]
that is, $(I-i\T )^{-1}\Q$ is similar to a $\theta$-accretive operator, or, equivalently, the operator $(I-i\T )^{-1}\Q$ has a bounded $H^\infty(\Sigma_\theta)$ functional calculus. 

\item [(iv)] There exists a positive operator $\S$ on $\V$ such that
\[
 \bracket{\S\Q (I+i\T )^{-1} u}{u}_\V \in \Sigma_\theta \text{ for every } u\in \V \text{ and some } \theta<\frac{\pi}{2},
\]
that is, $\Q (I+i\T )^{-1}$ is similar to a $\theta$-accretive operator, or, equivalently, the operator $\Q(I+i\T )^{-1}$ has a bounded $H^\infty(\Sigma_\theta)$ functional calculus.  

\item [(v)] The operator $(I-\Q+i\T ) (I+\Q +i\T )^{-1}$ is polynomially bounded.

\item [(vi)] The operator $(I-\Q+i\T ) (I+\Q +i\T )^{-1}$ is similar to a contraction. 

\end{itemize}
\end{theorem}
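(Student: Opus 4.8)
The plan is to prove all six statements equivalent by closing one cycle and adding the horizontal equivalences, keeping the soft bounded-operator manipulations cleanly separated from the single genuinely analytic equivalence. Throughout I write $\A := I + i\T \in \cL(\V)$, so that $\a(u,v) = \bracket{\A u}{v}_\V$, and I record the two identities that drive everything: on $\H = \H_\Q$ the operator associated with $\a$ is $\L_\H = \Q^{-1}\A$ (densely), while the operator associated with $\s = \Re\a$ is $\L_{\s,\H} = \Q^{-1}$, a positive selfadjoint operator whose square root satisfies $\cD(\L_{\s,\H}^{1/2}) = \V$ with $\|\L_{\s,\H}^{1/2}u\|_\H = \|u\|_\V$. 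Since $\a$ is coercive, $0 \in \rho(\L_\H)$, so $\L_\H^{1/2}$ maps $\cD(\L_\H^{1/2})$ isomorphically onto $\H$; hence (i) is equivalent to $\cD(\L_\H^{1/2}) = \cD(\L_{\s,\H}^{1/2})$, i.e.\ to $\L_{\s,\H}^{1/2}\L_\H^{-1/2}$ extending to a bounded, boundedly invertible operator on $\H$. The analytic heart will be $(i)\Leftrightarrow(iv)$; everything else is soft.

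First, the elementary equivalences among (ii), (ii'), (iii), (iv). For a positive, boundedly invertible $\S \in \cL(\V)$ and $M \in \cL(\V)$, the condition $\bracket{\S M u}{u}_\V \in \Sigma_\theta$ for all $u$ becomes, after the substitution $w = \S^{1/2}u$, the statement that $\S^{1/2}M\S^{-1/2}$ is $\theta$-accretive; this is the precise meaning of ``$M$ is similar to a $\theta$-accretive operator''. Taking $M = \Q\A^{-1}$ in (iv), the substitution $u \mapsto \A^{-1}u$ together with $\Q = \Q^*$ and $\A^* = I - i\T$ moves between $\Q\A^{-1}$, $(\A^*)^{-1}\Q = (I-i\T)^{-1}\Q$ and the operators $\Q\S\A$ of (ii), absorbing the weight into $\S$; this yields $(ii)\Leftrightarrow(iii)\Leftrightarrow(iv)$. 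For $(ii)\Leftrightarrow(ii')$ one only upgrades accretivity to sectoriality: the relevant operators are bounded and boundedly invertible, so their numerical range is a bounded subset of the closed right half-plane bounded away from $0$, hence contained in some $\Sigma_\theta$ with $\theta<\frac{\pi}{2}$. The identification of these conditions with a bounded $H^\infty(\Sigma_\theta)$ calculus is the Hilbert-space theorem (McIntosh's theory of square functions) that a bounded sectorial operator has a bounded $H^\infty$ calculus if and only if it is similar to an accretive operator.

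Next, the Cayley transform links (iv) to (vi), and $(vi)\Rightarrow(v)$ is free. A direct computation gives, for $M = \Q\A^{-1}$,
\[
(I - M)(I + M)^{-1} = (I - \Q + i\T)(I + \Q + i\T)^{-1} =: Z ,
\]
so $Z$ is the Cayley transform $\kappa(M) = (I-M)(I+M)^{-1}$ of $M$. Since $\kappa$ carries the right half-plane onto the unit disc and commutes with similarities, $\S^{1/2}M\S^{-1/2}$ is accretive if and only if $\S^{1/2}Z\S^{-1/2} = \kappa(\S^{1/2}M\S^{-1/2})$ is a contraction, which is exactly $(iv)\Leftrightarrow(vi)$. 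Finally $(vi)\Rightarrow(v)$ is immediate from von Neumann's inequality, anything similar to a contraction being polynomially bounded.

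It remains to establish $(i)\Leftrightarrow(iv)$, which I expect to be the main obstacle; granting it the cycle closes, because $(v)\Rightarrow(iv)$ is again soft: polynomial boundedness of $Z$ gives, by approximation in the disc algebra, a bounded $H^\infty(\mathbb{D})$ calculus for $Z$, and composing with the inverse Cayley map yields a bounded $H^\infty(\Sigma_\theta)$ calculus for $M = \Q\A^{-1}$ on $\V$, i.e.\ (iv). For $(i)\Leftrightarrow(iv)$ itself I would argue through the comparability of square roots: the Kato property is exactly the boundedness and bounded invertibility of $\L_{\s,\H}^{1/2}\L_\H^{-1/2}$ on $\H$, and, since $\L_\H$ already has a bounded $H^\infty$ calculus on $\H$ (it is $\theta$-accretive with $\theta = \arctan\beta < \frac{\pi}{2}$), this comparability is what transfers the calculus of $\L_\H^{-1}$ to the operator $\Q\A^{-1}$ read on $\V$; the Hilbert-space theorem cited above then delivers, in both directions, the positive weight $\S$ of (iv). The delicate point — and the reason this is the crux — is that $\H = \H_\Q$ and $\V$ carry \emph{inequivalent} norms linked only through the unbounded maps $\Q^{\pm 1/2}$ and $\L_\H^{1/2}$, so accretivity and the functional calculus cannot be moved between the two inner products by any bounded similarity alone; it is precisely the square-root comparability encoded in the Kato property that licenses the transfer. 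I emphasize that this route goes through the $H^\infty$ calculus rather than attempting ``polynomially bounded $\Rightarrow$ similar to a contraction'' directly, the latter being false in general (Pisier); here it is the special Cayley structure of $Z$ over a sectorial operator that upgrades polynomial boundedness all the way to similarity.
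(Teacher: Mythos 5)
Your soft reductions (ii)$\Leftrightarrow$(iii)$\Leftrightarrow$(iv), the Cayley-transform identity $(I-\Q+i\T)(I+\Q+i\T)^{-1}=\kappa(\Q(I+i\T)^{-1})$, and (vi)$\Rightarrow$(v) via von Neumann's inequality all agree with the paper. But the step you yourself call the analytic heart, (i)$\Leftrightarrow$(iv), is never proved: you reduce (i) to the bounded invertibility of $\L_{\s,\H}^{1/2}\L_{\H}^{-1/2}$, correctly observe that the $\V$- and $\H$-norms are linked only by unbounded maps, and then assert that the square-root comparability ``licenses the transfer'' of the calculus of $\L_\H$ to the operator $\Q(I+i\T)^{-1}$ read on $\V$. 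That assertion \emph{is} the theorem, not an argument for it. Indeed $\L_\H$ always has a bounded $H^\infty$ calculus on $\H$ (it is $\arctan\beta$-accretive there), so the entire content of (i)$\Leftrightarrow$(iv) is whether that calculus can be realized on $\V$; saying that the Kato property is what makes this possible is circular. The paper's mechanism is Lemma \ref{Arendt charact}: the Kato property is equivalent to the boundedness of the $H^\infty$ calculus of $\L_{\a,\dual{\V}}$, the realization on the \emph{dual} space $\dual{\V}$, and the proof of that lemma is genuinely analytic, resting on the complex interpolation identity $j'(\H)=[\dual{\V},\cD(\L_{\s,\dual{\V}})]_{\frac{1}{2}}$ and on Yagi's characterisation of the calculus through $\cD(\L^{\frac{1}{2}})=[\dual{\V},\cD(\L)]_{\frac{1}{2}}$ \cite{Ya10}. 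Condition (iv) then follows because $\Q(I+i\T)^{-1}$ is unitarily equivalent, via the Riesz isomorphism $I_\V$, to $\L_{\a,\dual{\V}}^{-1}$, and \cite[Theorem 11.13]{KuWe04} converts the calculus into a $\theta$-accretive similarity. No substitute for this interpolation-theoretic input appears in your sketch, so your cycle does not close.

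Second, your ``soft'' proof of (ii$'$)$\Rightarrow$(ii) is incorrect. The operator $\Q\S(I+i\T)$ is in general \emph{not} boundedly invertible: $\Q$ is only nonnegative and injective (it is compact whenever $\V$ embeds compactly into $\H$), and its numerical range is not bounded away from $0$: if $\|u_n\|_\V=1$ and $\Q u_n\to 0$, then $\bracket{\Q\S(I+i\T)u_n}{u_n}_\V=\bracket{\S(I+i\T)u_n}{\Q u_n}_\V\to 0$. Even invertibility would not help: a diagonal operator with entries $i+\frac{1}{n}$ is accretive, bounded and invertible, yet its numerical range lies in no sector $\Sigma_\theta$ with $\theta<\frac{\pi}{2}$. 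The upgrade from accretive-after-similarity to $\theta$-accretive-after-similarity with $\theta<\frac{\pi}{2}$ is exactly as deep as the $H^\infty$ theory: the paper obtains it by combining the resolvent sectoriality of $\Q(I+i\T)^{-1}$ (Lemma \ref{lem on sect}) with \cite[Theorem 11.13]{KuWe04}, i.e.\ by routing (ii$'$)$\Rightarrow$(i)$\Rightarrow$(ii) rather than arguing with numerical ranges alone. The same unproved upgrade is hidden in your (vi)$\Rightarrow$(iv) (inverting the Cayley transform of a contraction only returns accretivity, $\theta=\frac{\pi}{2}$) and in your (v)$\Rightarrow$(iv), where moreover ``polynomial boundedness gives a bounded $H^\infty(\mathbb{D})$ calculus by approximation'' is false as stated, since polynomials are not uniformly dense in $H^\infty(\mathbb{D})$; the paper instead uses Runge's theorem, McIntosh's convergence theorem \cite{Mc86}, and the angle reduction afforded by sectoriality to reach (iv).
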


Recall that, if $\T$, $\Q \in\cL (\V )$ are selfadjoint operators, then $\Q \T $ is selfadjoint if and only if $\T$ and $\Q$ commute, or if and only if  $\bracket{\Q \T u}{u}_\V\in \R$ for every $u\in \V$. Therefore, the above Theorem \ref{charact0}(ii') gives the following sufficient condition for $(\a ,\H )$ to have the Kato property.

\begin{corollary}\label{cor 1}
 If $\T$ and $\Q$ commute, then $(\a, \H )$ has the Kato property. 
\end{corollary}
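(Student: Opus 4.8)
The plan is to verify condition (ii') of Theorem \ref{charact0} with the simplest possible choice of positive operator, namely $\S = I$. With this choice the expression to be controlled is $\bracket{\Q(I+i\T)u}{u}_\V$, which I would expand as
\[
\bracket{\Q(I+i\T)u}{u}_\V = \bracket{\Q u}{u}_\V + i\,\bracket{\Q\T u}{u}_\V.
\]

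The first summand is real and nonnegative, since $\Q$ is selfadjoint and nonnegative. The role of the commutativity hypothesis is to handle the second summand: as recalled in the remark immediately preceding the statement, the product $\Q\T$ of two commuting selfadjoint operators is again selfadjoint, so $\bracket{\Q\T u}{u}_\V\in\R$. Hence $i\,\bracket{\Q\T u}{u}_\V$ is purely imaginary, and taking real parts yields
\[
\Re\,\bracket{\Q(I+i\T)u}{u}_\V = \bracket{\Q u}{u}_\V \geq 0 \quad\text{for every } u\in\V.
\]
This is precisely condition (ii'), and Theorem \ref{charact0} then gives the Kato property of $(\a,\H)$.

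I do not expect any genuine obstacle here: the corollary is a direct specialization of the characterization, obtained by taking $\S$ to be the identity. The only step demanding any care is the passage from commutativity to the selfadjointness of $\Q\T$, and thence to $\bracket{\Q\T u}{u}_\V\in\R$, which is exactly the elementary fact isolated in the remark before the statement.
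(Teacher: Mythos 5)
Your proof is correct and is essentially the paper's own argument: the paper likewise invokes Theorem \ref{charact0}(ii') (implicitly with $\S = I$), using the remark that commutativity of $\T$ and $\Q$ makes $\Q\T$ selfadjoint, hence $\bracket{\Q\T u}{u}_\V\in\R$, so that $\Re\bracket{\Q(I+i\T)u}{u}_\V = \bracket{\Q u}{u}_\V\geq 0$. Nothing is missing; your write-up just spells out the expansion that the paper leaves implicit.
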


We start with auxiliary results on the operators appearing in Theorem \ref{charact0}.

\begin{lemma}\label{lem on sect} Let $\T$ and $\Q$ be selfadjoint, bounded operators on a Hilbert space $\V$. Assume that $\Q$ is nonnegative. 
Then, the operator $A := \Q (I+i\T )^{-1} \in\cL (\V)$ is sectorial of angle $\theta <\frac{\pi}{2}$. 
\end{lemma}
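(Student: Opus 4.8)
The plan is to read off both the spectral inclusion $\sigma(A)\subseteq\overline{\Sigma_\theta}$ and the sectorial resolvent bound from a single numerical-range estimate, applied not to $A$ itself but to the operator that appears when one inverts the natural factorization of the resolvent. First I would note that $I+i\T$ is invertible: since $\T=\T^*$, one has $\Re\bracket{(I+i\T)v}{v}_\V=\|v\|_\V^2$, so $I+i\T$ and $I-i\T=(I+i\T)^*$ are bounded below, hence $I+i\T$ is boundedly invertible and $A=\Q(I+i\T)^{-1}\in\cL(\V)$ is well defined. I set $\theta:=\arctan\|\T\|<\tfrac{\pi}{2}$ and fix an arbitrary $\theta'\in(\theta,\pi)$; since a resolvent bound on $\{z\notin\Sigma_{\theta'}\}$ automatically implies one on every larger excluded sector, it suffices to treat $\theta'$ close to $\theta$.

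The algebraic core is the identity $zI-A=\bigl(z(I+i\T)-\Q\bigr)(I+i\T)^{-1}$, valid for all $z$. Writing $B_z:=z(I+i\T)-\Q$, it shows that $z\in\rho(A)$ as soon as $B_z$ is invertible, and then $R(z,A)=(I+i\T)B_z^{-1}$. I would therefore control $B_z$ through its numerical range. For a unit vector $v$,
\[
\bracket{B_z v}{v}_\V=z\bigl(1+i\bracket{\T v}{v}_\V\bigr)-\bracket{\Q v}{v}_\V ,
\]
where $\bracket{\T v}{v}_\V\in[-\|\T\|,\|\T\|]$ (as $\T$ is selfadjoint) and $\bracket{\Q v}{v}_\V\geq 0$ (as $\Q\geq 0$). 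Hence $\overline{W(B_z)}$ is contained in the set $E:=\{\,z(1+ib)-c:\ |b|\leq\|\T\|,\ c\geq 0\,\}$.

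The geometric heart of the proof is the lower bound $\mathrm{dist}(0,E)\geq|z|\sin(\theta'-\theta)$ for $z\notin\Sigma_{\theta'}$. Indeed, each point $z(1+ib)$ has modulus at least $|z|$ and argument $\arg z+\arctan b$, which, once $|\arg z|>\theta'$ and $|b|\leq\|\T\|$, is separated from the direction of the positive real axis by more than $\theta'-\theta$; subtracting a nonnegative real $c$ therefore cannot move it within distance $|z|\sin(\theta'-\theta)$ of the origin (when the separation exceeds $\tfrac{\pi}{2}$ the real part is already negative and the distance is simply $\geq|z|$). Since $0\notin\overline{W(B_z)}$, both $B_z$ and $B_z^*$ are bounded below, so $B_z$ is invertible and $\|B_z^{-1}\|\leq \mathrm{dist}(0,\overline{W(B_z)})^{-1}\leq\bigl(|z|\sin(\theta'-\theta)\bigr)^{-1}$.

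Putting this together, every $z$ with $|\arg z|>\theta$ lies in $\rho(A)$, so $\sigma(A)\subseteq\overline{\Sigma_\theta}$; and for $z\notin\Sigma_{\theta'}$,
\[
\|zR(z,A)\|=|z|\,\|(I+i\T)B_z^{-1}\|\leq\frac{\|I+i\T\|}{\sin(\theta'-\theta)},
\]
a bound uniform in $z$, in particular as $z\to 0$ and $z\to\infty$. As $\theta'\in(\theta,\pi)$ was arbitrary, this is precisely sectoriality of angle $\theta<\tfrac{\pi}{2}$. I expect the subtle point to be conceptual rather than computational: one is tempted to estimate $W(A)$ directly, but $\Re\bracket{Au}{u}_\V$ contains a contribution from the commutator of $\T$ and $\Q$ and can be negative, so $A$ itself need not be accretive. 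Routing the numerical-range estimate through $B_z$—whose factor $(I+i\T)$ supplies the $|z|$ that tames the resolvent near $0$, where a naive product estimate would blow up—is what makes the argument go through cleanly.
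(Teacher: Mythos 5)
Your proof is correct, and it shares its pivot with the paper's: both rest on the factorization $z-A=\bigl(z(I+i\T)-\Q\bigr)(I+i\T)^{-1}$ and on bounding the factor $B_z:=z(I+i\T)-\Q$ away from zero through its numerical range, concluding invertibility of $B_z$ (you via $B_z$ and $B_z^*$ being bounded below, the paper via injectivity, closed range, and a duality argument for dense range) and then $\|zR(z,A)\|\le\|I+i\T\|\,\|zB_z^{-1}\|$. Where you genuinely diverge is the region of $z$ and the method of the lower bound. The paper proves $|\bracket{B_zu}{u}_\V|\ge|z|\,\|u\|_\V^2$ only on the half-plane $\{\Re z\le 0\}$, by expanding $|\bracket{B_zu}{u}_\V|^2$ exactly and using the signs of $\Re z$ and $\bracket{\Q u}{u}_\V$ to discard the cross terms, and then appeals to an unspecified ``standard argument based on the Neumann series extension'' to upgrade the half-plane bound $\sup_{\Re z\le0}\|zR(z,A)\|\le\|I+i\T\|$ to sectoriality of some angle $<\frac{\pi}{2}$. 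You instead establish $\mathrm{dist}\bigl(0,W(B_z)\bigr)\ge|z|\sin(\theta'-\theta)$ geometrically on the complement of every sector $\Sigma_{\theta'}$ with $\theta'>\theta:=\arctan\|\T\|$, which makes the proof self-contained (no extension step left to the reader), yields the explicit sectoriality angle $\arctan\|\T\|$ and the explicit constant $\|I+i\T\|/\sin(\theta'-\theta)$, at the cost of a slightly more delicate case analysis according to whether $|\arg w|$ exceeds $\frac{\pi}{2}$ (and of the wrap-around of arguments near $\pi$, which your hypothesis $\theta'<\pi$ silently but correctly handles). Your closing observation—that one cannot estimate $W(A)$ directly because the commutator of $\T$ and $\Q$ can make $A$ non-accretive, so the estimate must be routed through $B_z$—is exactly the point, and is why the paper's Theorem \ref{charact0} concerns similarity to accretive operators rather than accretivity itself.
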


\begin{proof} 
By a standard argument based on the Neumann series extension it is sufficient to show that $\sup_{\Re z\leq 0}\|zR(z,A)\|<\infty$.
Note that for every $z\in\CC$ with $\Re z\leq 0$ and every $u\in \V$ we have
\begin{align*}
 \bracket{(z+iz \T -\Q )u}{u}_\V & = ( \Re z \, \| u\|_\V^2 - \Im z \, \bracket{\T u}{u}_\V - \bracket{\Q u}{u} ) \\
& \phantom{=} + i\, ( \Im z \, \| u\|_\V^2 + \Re z \, \bracket{\T u}{u}_\V ) , 
\end{align*}
and therefore
\begin{align*}
 | \bracket{(z+iz \T -\Q )u}{u}_\V |^2 & = |z|^2 \, \|u\|_\V^4 - 2\Re z \, \| u\|_\V^2 \bracket{\Q u}{u}_\V + \\
 & \phantom{=} + ( |z|^2 \,\bracket{\T u}{u}^2_\V + 2 \, \Im z \, \bracket{\T u}{u}_\V \, \bracket{\Q u}{u}_\V + \bracket{\Q u}{u}_\V^2 ) \\
& \geq |z|^2 \, \| u\|_\V^4 ,
\end{align*}
or, by the Cauchy-Schwarz inequality,
\[
 \| (z+iz \T -\Q )u\|_\V \geq |z| \, \|u\|_\V .
\]
This inequality implies that $z+iz\T -\Q$ is injective and has closed range. A duality argument, using similar estimates as above, shows that $z+iz\T -\Q$ has dense range, and therefore $z+iz\T -\Q$ is invertible for every $z\in\CC$ with $\Re z\leq 0$. Moreover, the above inequality shows that 
\[
 \sup_{\Re z\leq 0} \| z (z+iz \T -\Q )^{-1} \| \leq 1 .
\]
As a consequence, $z -A = (z+iz \T -\Q ) (I+i\T )^{-1}$ is invertible for every $z\in\CC$ with $\Re z \leq 0$ and 
\[
 \sup_{\Re z\leq 0} \| z R(z,A)\| = \sup_{\Re z\leq 0} \| (I+i\T ) \, z (z+iz \T -\Q )^{-1} \| \leq \|I+i\T \|.
\]
\end{proof}

Let $A\in\cL (\V )$ be a bounded, sectorial operator of angle $\theta\in (0,\frac{\pi}{2} )$, and let $\cC := (I-A)(I+A)^{-1}$ be its {\em Cayley transform}. Then the equality
\begin{align*}
 (z-1) (z-\cC )^{-1} = \frac{z-1}{z+1} \, \left(\frac{z-1}{z+1} +A \right)^{-1} \, (I+A)
\end{align*}
shows that $\cC$ is a {\em Ritt operator}, that is, $\sigma (\cC ) \subseteq {\mathbb D} \cup \{ 1\}$ (where $\mathbb D\subset\CC$ is the open unit disk) and 
\[
\sup_{|z| > 1} \| (z-1)R(z,\cC ) \| <\infty .
\]
From this and the preceding lemma, we obtain the following statement.

\begin{lemma}\label{Ritt cond}
 The Cayley transform 
\[
\cC = (I-A)(I+A)^{-1} = (I-\Q +i\T ) (I+ \Q +i\T )^{-1}
\]
of the operator $A = \Q (I+i\T )^{-1}$ is a Ritt operator. 
\end{lemma}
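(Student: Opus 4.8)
The plan is to recognise that this lemma is almost entirely a consequence of Lemma \ref{lem on sect} combined with the general computation carried out in the paragraph immediately preceding the statement. Lemma \ref{lem on sect} already tells us that $A = \Q(I+i\T)^{-1}$ is a bounded, sectorial operator of some angle $\theta < \frac{\pi}{2}$, and the displayed resolvent identity just above the lemma shows that the Cayley transform $\cC = (I-A)(I+A)^{-1}$ of \emph{any} such operator is a Ritt operator (both $\sigma(\cC)\subseteq \mathbb{D}\cup\{1\}$ and the resolvent bound follow from it). Hence the only genuinely new content is the purely algebraic identity identifying $\cC$ with $(I-\Q+i\T)(I+\Q+i\T)^{-1}$; the analytic work has all been done already.

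To establish that identity I would first note that the Cayley transform is well defined, since $\sigma(A)\subseteq\Sigma_\theta$ with $\theta<\frac{\pi}{2}$ forces $-1\in\rho(A)$, so $I+A$ is invertible. Next I would factor both $I-A$ and $I+A$ through the resolvent of $i\T$: inserting $(I+i\T)(I+i\T)^{-1}=I$ and using $A=\Q(I+i\T)^{-1}$ gives
\[
 I \pm A = \bigl((I+i\T)\pm\Q\bigr)(I+i\T)^{-1} = (I\pm\Q+i\T)(I+i\T)^{-1},
\]
where $I+i\T$ is invertible because $\T$ is selfadjoint and thus $\sigma(i\T)\subseteq i\R$, so $-1\notin\sigma(i\T)$. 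Since the factor $(I+i\T)^{-1}$ sits on the right in both expressions, it cancels in the product, leaving
\[
 \cC = (I-A)(I+A)^{-1} = (I-\Q+i\T)(I+i\T)^{-1}(I+i\T)(I+\Q+i\T)^{-1} = (I-\Q+i\T)(I+\Q+i\T)^{-1},
\]
which is exactly the claimed formula.

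I do not expect a real obstacle here: the delicate estimate $\sup_{\Re z\leq 0}\|zR(z,A)\|<\infty$ was already secured in Lemma \ref{lem on sect}, and what remains is the elementary manipulation above. The only point demanding a little care is bookkeeping on invertibility — confirming in advance that $I+A$, $I+i\T$, and consequently $I+\Q+i\T = (I+A)(I+i\T)$ are all invertible, so that every inverse appearing in the computation genuinely exists and the cancellation is legitimate.
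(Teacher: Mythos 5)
Your proof is correct and takes essentially the same route as the paper: Lemma \ref{lem on sect} gives sectoriality of $A=\Q(I+i\T)^{-1}$ with angle $\theta<\frac{\pi}{2}$, and the resolvent identity displayed just before the lemma shows that the Cayley transform of any such operator is a Ritt operator. The only difference is that you write out the algebraic identity $(I-A)(I+A)^{-1}=(I-\Q+i\T)(I+\Q+i\T)^{-1}$ and the accompanying invertibility bookkeeping explicitly, which the paper treats as immediate.
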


Recall that a bounded operator $\cC$ on a Hilbert space $\V$ is a Ritt operator if and only if it is power bounded and $\sup_{n\in\N} n\|\cC^n - \cC^{n+1}\|<\infty$; see \cite{NaZe99}. Furthermore, a bounded operator $\cC$ on a Hilbert space is {\em polynomially bounded} if there exists a constant $M\geq 0$ such that for every polynomial $p$ one has
\[
\| p(\cC ) \| \leq M \, \sup_{|z|\leq 1}| p(z)|.
\]

The proof of Theorem \ref{charact0} is a consequence of 
the characterisation of the Kato property by means of the boundedness of the $H^\infty$ functional calculus for the operator $\L_{\dual{\V}} = \L_{\a, \dual{\V}}$ given by Arendt in \cite[Theorem 5.5.2, p.45]{Ar04}. 

\begin{lemma}\label{Arendt charact} 
Let $\L_{\dual{\V}} = \L_{\a ,\dual{\V}}$ be the operator associated with $(\a ,\H )$ as above. Then the following assertions are equivalent:
\begin{itemize}
 \item [(i)] $(\a, \H )$ has the Kato property.
 \item [(ii)] $\L_{\dual{\V}}$ has a bounded $H^\infty$ functional calculus.
\end{itemize}
Moreover, if (i) or (ii) holds, then $\L_{\dual{\V}}$ has a bounded $H^\infty(\Sigma_\theta)$ functional calculus for every $\theta > \arctan \beta$ with $\beta$ as in \eqref{sectorial}.   
\end{lemma}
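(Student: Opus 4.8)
The plan is to exploit that $\L_\H$ and $\L_{\dual{\V}}$ are two realisations of one and the same operator on the Gelfand scale $\V \hookrightarrow \H \hookrightarrow \dual{\V}$, with $\L_\H$ the part of $\L_{\dual{\V}}$ in $\H$, and to reduce both assertions to the behaviour of the square root across this scale. Since the statement is, up to bookkeeping, Arendt's \cite[Theorem 5.5.2]{Ar04}, the role of the proof is to organise the ingredients so that the sector angle can be tracked for the moreover part.

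First I would record the two interpolation identities that drive everything. On the Hilbert space $\H$ the operator $\L_\H$ is $\theta$-accretive with $\theta = \arctan\beta$ (as observed in the preliminaries), hence by McIntosh's theorem \cite{KuWe04} it always admits a bounded $H^\infty(\Sigma_{\theta'})$ calculus for every $\theta' > \theta$, and therefore $\cD(\L_\H^{1/2}) = [\H, \cD(\L_\H)]_{1/2}$ with equivalence of norms. The point is that this interpolation space need not be $\V$: its coincidence with $\V$ is exactly the Kato property. Dually, I would establish $[\dual{\V}, \V]_{1/2} = \H$. This I would get from the symmetric comparison operator $\L_{\s,\dual{\V}}$: being self-adjoint it has a bounded $H^\infty$ calculus, so $\cD(\L_{\s,\dual{\V}}^{1/2}) = [\dual{\V}, \cD(\L_{\s,\dual{\V}})]_{1/2} = [\dual{\V}, \V]_{1/2}$, while the classical (symmetric) Kato equality $\cD(\L_{\s,\H}^{1/2}) = \V$ extrapolates to $\cD(\L_{\s,\dual{\V}}^{1/2}) = \H$; comparing the two gives the identity.

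For (ii) $\Rightarrow$ (i) I would argue as follows. If $\L_{\dual{\V}}$ has a bounded $H^\infty$ calculus, then $\cD(\L_{\dual{\V}}^{1/2}) = [\dual{\V}, \cD(\L_{\dual{\V}})]_{1/2} = [\dual{\V}, \V]_{1/2} = \H$ by the identity above, i.e. $\L_{\dual{\V}}^{1/2} : \H \to \dual{\V}$ is an isomorphism. Since $\L_\H^{1/2}$ is the part of $\L_{\dual{\V}}^{1/2}$ in $\H$, the shift-invariance of the fractional power along the scale turns this into $\L_\H^{1/2} : \V \to \H$ being an isomorphism, that is, $\cD(\L_\H^{1/2}) = \V$. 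For the converse (i) $\Rightarrow$ (ii), which is the genuinely nontrivial direction because accretivity, and hence an automatic $H^\infty$ bound, is available on $\H$ but not on $\dual{\V}$, I would pass through McIntosh's equivalence between $H^\infty$ calculus and two-sided square function estimates: the Kato equality together with its adjoint version (Lions--Kato: coincidence of any two of $\cD(\L_\H^{1/2})$, $\cD(\L_\H^{*1/2})$, $\V$ forces all three) yields square function bounds for $\L_\H$ and $\L_\H^*$ on $\H$, which transfer by duality and the interpolation identity to square function bounds for $\L_{\dual{\V}}$ on $\dual{\V}$, hence to the desired calculus.

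The main obstacle is the precise identification of $\L_\H^{1/2}$ with the part in $\H$ of $\L_{\dual{\V}}^{1/2}$, i.e. the compatibility of the holomorphic functional calculi of the two realisations across the extrapolation scale; once this consistency and the interpolation identity $[\dual{\V},\V]_{1/2} = \H$ are in hand, both implications and the tracking of the angle $\theta > \arctan\beta$ in the moreover part are bookkeeping. I expect the square function transfer in (i) $\Rightarrow$ (ii) to require the most care, since it is where the absence of accretivity on $\dual{\V}$ must be compensated by duality.
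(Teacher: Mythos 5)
Your setup, your derivation of the identity $[\dual{\V},\V]_{\frac12}=j'(\H)$ from the self-adjoint comparison operator $\L_{\s,\dual{\V}}$ (the paper quotes this from \cite[Corollary 2.3, p.113]{Ya10}, you rederive it; both are fine), and your (ii)$\Rightarrow$(i) direction all match the paper's argument, including the key compatibility $\L_\H^{-\frac12}=j'^{-1}\L_{\dual{\V}}^{-\frac12}j'$ and the graph-norm equivalence on $\cD(\L_{\dual{\V}})=J(\V)$ coming from the invertibility of $I+i\T$. The genuine gap is in (i)$\Rightarrow$(ii), exactly the step you flag as delicate. You argue: Kato plus Lions--Kato yield square function bounds for $\L_\H$ and $\L_\H^*$ on $\H$, and these ``transfer by duality and the interpolation identity'' to square function bounds for $\L_{\dual{\V}}$ on $\dual{\V}$. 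This cannot work as stated, for a structural reason: the square function bounds for $\L_\H$ and $\L_\H^*$ on $\H$ hold for \emph{every} sectorial, coercive form, with no Kato hypothesis at all --- $\L_\H$ has numerical range in $\Sigma_{\arctan\beta}$, hence a bounded $H^\infty$ calculus on $\H$ by \cite[Theorem 11.13]{KuWe04}, as the paper notes in its preliminaries --- and the two transfer tools you invoke (duality, and the identity $[\dual{\V},\V]_{\frac12}=j'(\H)$, which concerns only the symmetric part $\s$) are likewise unconditional. If the transfer step were valid as described, every sectorial form would satisfy (ii), and then your own (ii)$\Rightarrow$(i) would give every form the Kato property, contradicting Proposition \ref{counterex}. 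In other words, your argument never actually consumes the hypothesis (i): its content is not carried by the square function bounds on $\H$, which are automatic.

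What is missing is the mechanism converting $\cD(\L_\H^{\frac12})=\V$ into a statement about the operator $\L_{\dual{\V}}$ itself. The paper does this by running the square-root compatibility forward: from (i) one computes $\cD(\L_{\dual{\V}}^{\frac12})=\L_{\dual{\V}}^{-\frac12}(\dual{\V})=j'(\H)$, hence $\cD(\L_{\dual{\V}}^{\frac12})=[\dual{\V},\cD(\L_{\dual{\V}})]_{\frac12}$ by the interpolation identity and the graph-norm equivalence, and then \cite[Theorem 16.3, p.532]{Ya10} --- a converse, valid for sectorial operators on Hilbert spaces, to the standard implication ``bounded calculus $\Rightarrow$ square-root domain equals interpolation space'' --- yields (ii). Alternatively, and closest in spirit to your duality idea: once $\cD(\L_{\dual{\V}}^{\frac12})=j'(\H)$ is known, the Closed Graph Theorem makes $\L_{\dual{\V}}^{\frac12}j':\H\to\dual{\V}$ an isomorphism which intertwines $\L_\H$ with $\L_{\dual{\V}}$ (this is precisely the similarity $\L_\H=j'^{-1}\L_{\dual{\V}}^{-\frac12}\L_{\dual{\V}}\L_{\dual{\V}}^{\frac12}j'$ that the paper writes down for the ``moreover'' part), so the automatic calculus of $\L_\H$ transfers to $\L_{\dual{\V}}$ by similarity, and with it the angle $\arctan\beta$. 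Either way, the passage from the Kato equality to $\cD(\L_{\dual{\V}}^{\frac12})=j'(\H)$ and then to an isomorphism or interpolation identity \emph{for the nonsymmetric operator} is the actual proof; in your write-up it is replaced by an appeal to facts that are true but independent of the hypothesis, so the step as stated would fail.
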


For the convenience of the reader we recall the proof of this result using our notation and with slight modifications.

\begin{proof}
First of all, note that the operator $\L_\H$ can be expressed as the operator $j'^{-1} \L_{\dual{\V}} j'$ with domain $\{ u\in \H : j'u \in \cD ( \L_{\dual{\V}} \textrm{ and } \L_{\dual{\V}} j'u \in j'(\H)\}$. Then $(\l - \L_\H)^{-1} = j'^{-1} (\l - \L_{\dual{\V}})^{-1} j'$ for every $\l\notin \Sigma_{\theta}$, and by the definition of the square roots via contour integrals, 
\[ 
\L_\H^{-\frac{1}{2}} = j'^{-1} \L_{\dual{\V}}^{-\frac{1}{2}} j'.
\]
(i)$\Rightarrow$(ii) Therefore, if $\cD(\L_\H^{\frac{1}{2}}) = \cR( \L_\H^{-\frac{1}{2}} ) = j(\V)$, then 
\begin{align*}
\cD(\L_{\dual{\V}}^{\frac{1}{2}}) & := \L_{\dual{\V}}^{-\frac{1}{2}} (\dual{\V}) = \L_{\dual{\V}}^{-\frac{1}{2}} \L_{\dual{\V}}(j'j (\V))\\
& =  \L_{\dual{\V}}^{-\frac{1}{2}} \L_{\dual{\V}}(j' \L_\H^{-\frac{1}{2}}(\H) ) =  \L_{\dual{\V}}^{-\frac{1}{2}} \L_{\dual{\V}}( \L_{\dual{\V}}^{-\frac{1}{2}} j'(\H) )\\
& = j'(\H),
\end{align*} 
where the last equality follows from $\L^{\frac{1}{2}}_{\dual{\V}} \L_{\dual{\V}}^{\frac{1}{2}}=\L_{\dual{\V}}$; see e.g. \cite[Theorem 15.15, p.289]{KuWe04}. By \cite[Corollary 2.3, p.113]{Ya10},  $j'(\H) = [\dual{\V}, \cD(\L_{\s, \dual{\V}})]_{\frac{1}{2}}$, where on $\cD(\L_{\s, \dual{\V}}) = J(\V)$ we consider the graph norm of $\L_{\s, \dual{\V}}$, that is, $\|\L_{\s, \dual{\V}} \cdot\|_{\dual{\V}} + \|\cdot\|_{\dual{\V}}$. 
Since for $v\in \V$ we have 
\[
I_\V \L_{\dual{\V}} Jv = (I+i\T )v \quad \textrm{ and }\quad I_\V \L_{\s, \dual{\V}} Jv = v,  
\]
hence 
\[ 
\|\L_{\dual{\V}} Jv \|_{\dual{\V}} = \|(I+i\T )v\|_\V \quad  \textrm{and }\quad \|\L_{\dual{\V}} Jv \|_{\dual{\V}} = \|v\|_\V. 
\] 
Consequently, the invertibility of $I+i\T$ implies, that the graph norm of $\L_{\s, \dual{\V}}$ is equivalent to the graph norm of $\L_{\dual{\V}} = \L_{\a, \dual{\V}}$ on $\cD(\L_{\dual{\V}}) = J(\V)$.
Therefore, we get that 
\[
 [\dual{\V}, \cD(\L_{\dual{\V}})]_{\frac{1}{2}} = \cD(\L_{\dual{\V}}^{\frac{1}{2}}).
\]
 Hence, by \cite[Theorem 16.3, p.532]{Ya10}, $\L_{\dual{\V}}$ has a bounded $H^\infty$ functional calculus.  
 
(ii)$\Rightarrow$(i) On the other hand,  if $\L_{\dual{\V}}$ has a bounded $H^\infty$ functional calculus, then as above we get $ \cD(\L_{\dual{\V}}^{\frac{1}{2}}) =
[\dual{\V}, \cD(\L_{\dual{\V}})]_{\frac{1}{2}} = j'(\H)$. 
Therefore, 
\begin{align*}
\cD(\L_{\H}^{\frac{1}{2}}) & := \L_{\H}^{-\frac{1}{2}} j'^{-1}(j'(\H)) = \L_{\H}^{-\frac{1}{2}} j'^{-1} \L_{\dual{\V}}^{-\frac{1}{2}} (\dual{\V})\\
& =  j'^{-1} j' \L_{\H}^{-\frac{1}{2}} j'^{-1} \L_{\dual{\V}}^{-\frac{1}{2}} (\dual{\V}) = j'^{-1} \L_{\dual{\V}}^{-\frac{1}{2}}  \L_{\dual{\V}}^{-\frac{1}{2}} (\dual{\V}) = j'^{-1} j'j (\V)\\
& = j(\V).
\end{align*}

For the last statement about the angle of the $H^\infty$ functional calculus first note, that $\cD(\L_{\dual{\V}}^{\frac{1}{2}}) = [ \dual{\V}, \cD(\L_{\dual{\V}})]_{\frac{1}{2}} = j'(\H)$ yields 
\[
L_H = j'^{-1} L_{\dual{\V}}^{-\frac{1}{2}} L_{\dual{\V}} L_{\dual{\V}}^{\frac{1}{2}} j'.
\] Moreover, by the Closed Graph Theorem, the operator $j'$ is an isomorphism from $H$ onto $\cD(\L_{\dual{\V}}^{\frac{1}{2}})=j'(H)$ equipped with the graph norm. Since the operator $L_H$ is $(\arctan \beta)$-accretive, therefore it is sectorial of angle $\arctan \beta$, and consequently the operator $L_{\dual{\V}}$, too. Finally, for example, by \cite[Theorem 16.3, p.532]{Ya10} (cf. \cite[Remark 16.2, p.536]{Ya10}), $L_{\dual{\V}}$ has a bounded $H^\infty$ functional calculus in any sectorial domain $\Sigma_\theta$ with $\theta > \arctan \beta$. This completes the proof. 
 \end{proof}

\begin{proof}[Proof of Theorem \ref{charact0}] Assume that $(\a, \H)$ has the Kato property. By  Lemma \ref{Arendt charact}, $\L_{\dual{\V}}$ has a bounded $H^\infty(\Sigma_\theta)$ functional calculus for every  $\theta > \arctan \beta$. Fix $\theta\in (\arctan \beta , \frac{\pi}{2})$. By the characterisation of the boundedness of the $H^\infty$ functional calculus,  \cite[Theorem 11.13, p.229]{KuWe04}, 
$\L_{ \dual{\V}}$ is $\theta$-accretive with respect to an equivalent inner product $\bracket{\cdot}{\cdot}_\theta$ on $\dual{\V}$.
Let $\widetilde \S \in \cL (\dual{\V} )$ be the positive operator such that $\bracket{\cdot}{\cdot}_\theta = \bracket{\widetilde{\S}\cdot}{\cdot}_{\dual{\V}}$. Then $\S := I_\V \widetilde{\S} I_\V^{-1} \in \cL (\V)$ is a positive operator on $\V$.

First, note that $I_\V \L_{\dual{\V}} J v = (I+i\T )v$ and $I_\V Jv = \Q v$ for every $v\in \V$. Then, 
\begin{align*}
\bracket{\L_{\dual{\V}}Jv}{Jv}_\theta & = \bracket{\widetilde \S \L_{\dual{\V}}Jv}{Jv}_{\dual{\V}}\\
& = \bracket{I_\V\widetilde \S I^{-1}_\V I_\V \L_{\dual{\V}} J^{-1}v}{I_\V J v}_\V\\
& = \bracket{I_\V  \L_{\dual{\V}} Jv}{\S I_\V J v}_\V\\
& = \bracket{I_\V J v}{\S(I+i\T )v}_\V\\
& = \bracket{(I+i\T )v}{\S\Q v}_\V\\
& = \bracket{\Q \S(I+i\T )v}{v}_\V
\end{align*} 
for every $v\in \V$. Therefore, the operator $\Q \S(I+i\T )$ is $\theta$-accretive with respect to $\bracket{\cdot}{\cdot}_\V$.
Therefore, (i)$\Rightarrow$(ii)$\Rightarrow$(ii$'$). The implication (ii$'$)$\Rightarrow$(i) follows from a similar argument.

The equivalences (ii)$\Leftrightarrow$(iii)$\Leftrightarrow$(iv) follow from the following chain of equivalences which holds for every positive operator $\S\in\cL (\V)$ and $\theta\in (0,\frac{\pi}{2}]$: 
\begin{align*}
 & \Q \S (I+i\T ) \text{ is }\theta\text{-accretive} \\
\Leftrightarrow \quad & \forall u\in \V : \, \bracket{\Q \S (I+i\T )u}{u}_\V \in \Sigma_\theta \\
\Leftrightarrow \quad & \forall u\in \V : \, \bracket{\Q \S u}{(I+i\T )^{-1}u}_\V  \in \Sigma_\theta \\
\Leftrightarrow \quad & \forall u\in \V : \, \bracket{u}{\S^\frac12 \Q (I+i\T )^{-1} \S^{-\frac12} u}_\V  \in \Sigma_\theta \\
\Leftrightarrow \quad & \S^\frac12  (I-i\T )^{-1}\Q \S^{-\frac12}  \text{ is }\theta\text{-accretive} \\
\Leftrightarrow \quad & \S^\frac12  \Q(I+i\T )^{-1} \S^{-\frac12}  \text{ is }\theta\text{-accretive}.
\end{align*}

For (iv)$\Leftrightarrow$(v), set $A:=\Q (I-i\T )^{-1}$, and note that its Cayley transform is given by  
\[
\cC := \phi(A) = (I-\Q+i\T ) (I+\Q +i\T )^{-1},
\]
where $\phi$ is the conformal map $\phi(z):= (1-z)(1+z)^{-1}$ from  $\Sigma_{\frac{\pi}{2}}$ onto  $\{|z|<1\}$. Moreover, for every polymomial $p$ we have
\[ 
p(\cC) = (p\circ \phi)(A) \quad \textrm{and } \quad \sup_{z\in \Sigma_\theta}|(p\circ \phi)(z)|\leq \sup_{|z|<1} |p(z)|.
\] 
 Therefore, the boundedness of the $H^\infty(\Sigma_\theta)$ functional calculus of $A$ with $\theta\leq \frac{\pi}{2}$ yields the polynomial boundedness of its Cayley transform $\cC$.
  For the converse, by Runge's theorem, it is easy to see that $A$ has a bounded $\mathcal{R}(\Sigma_{\frac\pi2})$ functional calculus; here $\mathcal{R}(\Sigma_{\frac\pi2})$ stands for the algebra of rational functions with poles outside $\Sigma_{\frac\pi2}$. 
Then, the boundedness of the $H^\infty(\Sigma_{\frac\pi2})$ functional calculus follows again by an approximation argument and McIntosh's convergence theorem \cite[Section 5, Theorem]{Mc86}; see also \cite[Proposition 3.13, p.66]{Hs06}. Since $A$ is $\theta$-sectorial for some $\theta<\frac\pi2$, \cite[Theorem 11.13]{KuWe04} gives (iv).

Finally, for (v)$\Rightarrow$(vi), since the Cayley transform $\cC= \phi(A)$ is a Ritt operator, see Lemma \ref{Ritt cond}, by \cite[Theorem 5.1]{LM98}, it is similar to a contraction. The converse is a consequence of the von Neumann inequality.
\end{proof}

\begin{remark}\label{rem1} 
(a) By \cite[Theorem 11.13 H7)]{KuWe04} one can show that if $(\a, \H)$ has the Kato property, then (iv) in Theorem \ref{charact0} holds with
\[
S: =  \int_{\Sigma_{\pi - \theta}} A^*e^{zA^*}Ae^{zA} \ud z  =\int_{\Sigma_{\pi - \theta}} |Ae^{zA}|^2 \ud z,
\]
where $A:=\Q (I-i\T)^{-1}$ and the integral exists in the weak operator topology.

(b) Note that in the case when the operator $\Q$ is invertible on $\V$, or equivalently, the inner products on $\H$ and $\V$ are equivalent, then $(\a, \H)$ has the Kato property simply because $\L_\H \in \cL(\H) = \cL(\V)$. 
It should be pointed out, that in this case the similarity to a contraction of the operator 
\begin{equation}\label{Fan for C}
\cC = (I-\Q + i\T )(I+\Q + i\T)^{-1} = (\T - i(I-\Q ))(\T - i(I+\Q ))^{-1},
\end{equation}
which is stated in Theorem \ref{charact0} (vi), can be proved in a straightforward way. Indeed, in \cite[Theorem 1]{Fa73} Fan proved that an operator $\cC\in \cL (\V)$ with $1\in \rho(\cC )$ is similar to a contraction if and only if it can be expressed in the form
\[ 
 (E - iF)(E - iG)^{-1},
\] 
for some selfadjoint operators $E$, $F$, $G \in \cL(\V)$ such that $G+F$ and $G-F$ are positive with  $0\in \rho(G-F)$. Therefore, in the case of $\Q$ being invertible, the above stated expression of the operator $\cC$, that is, \eqref{Fan for C}, satisfies these conditions.    
\end{remark}

\section{Kato property and triangular operators}

Recall that a bounded operator $\Delta$ on a Hilbert space $\V$ is {\it triangular} if there exists a constant $M\geq 0$ such that
\begin{equation}\label{deftrian}
 \left| \sum_{j=1}^{n} \sum_{k=1}^{j} \bracket{\Delta u_j}{v_k}_\V \right|
\leq M  \sup_{|a_j|=1}\left\| \sum_{j=1}^n a_j u_j\right\| \sup_{|a_j|=1}\left\| \sum_{j=1}^n a_j v_j\right\|.
\end{equation} for every $n\in \N$ and every
$u_1$, $\dots$, $u_n$, $v_1$, $\dots$, $v_n \in \V$. By a theorem of Kalton \cite[Theorem 5.5]{Ka07}, an operator $\Delta$ on $\V$ is triangular if and only if
$\sum_{n=1}^{\infty} \frac{s_n(\Delta )}{n+1}< \infty$, where $(s_n(\Delta ))_{n\in \N}$ is the sequence of singular values of $\Delta $. Therefore, the Schatten-von Neumann classes are included in the class of triangular operators.
We refer the reader to \cite[Section 5]{Ka07} for basic properties of triangular operators. One interest in the class of triangular operators stems from the following perturbation theorem by Kalton \cite[Theorem 7.7]{Ka07}.

\begin{lemma} \label{Kalton}
 Let $A$ and $\B$ be two sectorial operators on a Hilbert space $\H$. Assume that $\B$ has a bounded $H^\infty$ functional calculus, and that $A = (I+\Delta)\B$ for some triangular operator $\Delta$. Then $A$ has a bounded $H^\infty$ functional calculus, too. 
\end{lemma}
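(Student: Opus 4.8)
The plan is to deduce the bounded $H^\infty$ calculus of $A$ from a square function estimate, via McIntosh's characterisation: on a Hilbert space a sectorial operator $T$ has a bounded $H^\infty$ functional calculus if and only if both $T$ and $T^*$ satisfy a square function estimate
\[
\int_0^\infty \|\psi(tT)x\|_\H^2\,\frac{\ud t}{t} \le C\,\|x\|_\H^2
\]
for one (hence every) non-degenerate $\psi\in H^\infty_0(\Sigma_\mu)$ (together these give the two-sided estimate for $T$). Since $\B$ has a bounded $H^\infty$ calculus it satisfies these estimates; the task is to transfer them to $A=(I+\Delta)\B$, and triangularity of $\Delta$ is exactly what makes the transfer work.

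First I would fix a convenient $\psi$, say $\psi(z)=z(1+z)^{-2}$, and discretise the quadratic estimate to the dyadic scales $t=2^n$, $n\in\ZZ$, reducing the assertion for $A$ to a bound $\sum_{n}\|\psi(2^n A)x\|_\H^2\le C\|x\|_\H^2$ together with the dual bound for $A^*$. Using the identity $\l-A=(\l-\B)-\Delta\B$, I would expand the resolvent,
\[
R(\l,A)=R(\l,\B)+R(\l,A)\,\Delta\,\bigl(\l R(\l,\B)-I\bigr),
\]
and integrate against $\psi$ along a contour surrounding the joint sector to write $\psi(tA)=\psi(t\B)+E_t$, where the error $E_t$ carries the perturbation $\Delta$. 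The unperturbed part $\psi(t\B)$ already obeys the square function estimate by hypothesis, so matters reduce to controlling the scale-indexed family $\{E_{2^n}\}_{n}$.

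The heart of the argument, and the point where triangularity enters, is to recognise the contribution of $\{E_{2^n}\}$ to the polarised quadratic sum as a bilinear expression of the triangular shape $\sum_{j}\sum_{k\le j}\bracket{\Delta u_j}{v_k}_\H$ occurring in \eqref{deftrian}, where $u_j$ and $v_k$ are functional-calculus images of $x$ and $y$ at the scales $2^j$ and $2^k$. The dyadic scales supply precisely the ordering of the triangular truncation, while the off-diagonal decay of the products $\psi(2^j\B)\psi(2^k\B)$ localises the interaction. The crucial gain is that, because $\B$ has a bounded $H^\infty$ calculus, the maximal norms on the right of \eqref{deftrian} are under control: for unimodular scalars $(a_j)$ one has $\sum_j a_j\psi(2^j\B)=m(\B)$ with $\|m\|_{H^\infty}\le\sup_{z}\sum_j|\psi(2^jz)|=:C_\psi<\infty$, so that $\sup_{|a_j|=1}\|\sum_j a_j u_j\|\le C\|x\|_\H$ and likewise for the $v_k$. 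Substituting these into \eqref{deftrian} gives $|\sum_j\sum_{k\le j}\bracket{\Delta u_j}{v_k}_\H|\le C\,M\,\|x\|_\H\|y\|_\H$, which is the sought square function estimate for $A$; the diagonal terms $j=k$ are absorbed separately through the quadratic estimate for $\B$. The estimate for $A^*$ follows from the symmetric argument, using that $s_n(\Delta^*)=s_n(\Delta)$, so that $\Delta^*$ is again triangular, and that $\B^*$ inherits the bounded $H^\infty$ calculus.

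The main obstacle I expect is the bookkeeping of the third step: turning the genuinely continuous error $E_t$ into a clean discrete bilinear form that sits exactly in the triangular pattern $\sum_{k\le j}$, guaranteeing convergence of the resolvent/contour expansion, and absorbing the diagonal and near-diagonal terms that escape the strict triangular sum. Making the off-diagonal decay quantitative enough that $\sum_j|\psi(2^jz)|$ is uniformly bounded in $z$ and that the tails of the expansion are summable is where the technical weight of Kalton's theorem concentrates.
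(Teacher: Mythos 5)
First, a point of comparison: the paper gives \emph{no} proof of this statement at all --- Lemma~\ref{Kalton} is Kalton's perturbation theorem, quoted verbatim from \cite[Theorem 7.7]{Ka07}. So your attempt has to stand on its own as a proof of Kalton's theorem, and it does not: there is a circularity at exactly the point where the real difficulty of that theorem sits. Your reduction to square function estimates, the dyadic discretisation, and the control of the $\B$-side maximal sums are fine; in particular $\sup_{|a_j|=1}\|\sum_j a_j\psi(2^j\B)x\|\le C_\psi C_\B\|x\|$ is a correct consequence of the bounded $H^\infty$ calculus of $\B$. The gap is the phrase ``and likewise for the $v_k$''. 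From $A=(I+\Delta)\B$ one has
\[
R(\lambda,A)-R(\lambda,\B)=R(\lambda,A)\,\Delta\,\bigl(\lambda R(\lambda,\B)-I\bigr),
\]
so after contour integration the error term pairs $\Delta$ applied to $\B$-resolvent images of $x$ against \emph{adjoint resolvent images under $A$} of the dual vector:
\[
\bracket{\bigl(\psi(tA)-\psi(t\B)\bigr)x}{y}_\H=\frac{1}{2\pi i}\int_\Gamma \psi(t\lambda)\,\bracket{\Delta\bigl(\lambda R(\lambda,\B)-I\bigr)x}{R(\lambda,A)^*y}_\H\,\ud \lambda .
\]
However you discretise this, one of the two families entering the triangular estimate \eqref{deftrian} consists of vectors $\varphi_k(A^*)y$, not images under $\B$ or $\B^*$. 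The maximal bound $\sup_{|a_k|=1}\|\sum_k a_k\varphi_k(A^*)y\|\le C\|y\|$ for such a family is \emph{not} a consequence of sectoriality of $A$: by Rademacher averaging (in a Hilbert space $\mathbb{E}\,\|\sum_k\varepsilon_k w_k\|^2=\sum_k\|w_k\|^2$) it already implies the upper square function estimate for $A^*$, i.e.\ precisely the half of the conclusion that concerns the perturbed operator. At this step you assume what is to be proved.

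The natural repairs also fail, which is worth recording. If instead you bound the $A^*$-side by the truncated (hence a priori finite) square-function constant $Q_N$ of $A^*$, the triangular estimate yields an inequality of the form $Q_N\le C_\B+c\,M\,C_\B\,Q_N$, where $M$ is the triangular constant of $\Delta$; absorption requires $c\,M\,C_\B<1$, and nothing in the hypotheses makes $M$ small. You cannot manufacture smallness either: writing $I+\Delta$ as a product of perturbations with small triangular parts produces intermediate operators $(I+\Delta')\B$ whose sectoriality is not guaranteed (only $A$ and $\B$ are assumed sectorial), and iterating the resolvent identity into a Neumann series to eliminate $R(\lambda,A)$ needs $\|\Delta\|$ small. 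This obstruction is exactly why Kalton's theorem is deep; his proof runs along quite different lines, through the machinery of Sections 5--7 of \cite{Ka07}, and is not recoverable from your outline. A smaller remark: your ``symmetric argument'' for $A^*$ needs care in any case, since $A^*=\B^*(I+\Delta^*)$ is a \emph{right}-sided perturbation; on a Hilbert space this particular issue is moot because the bounded $H^\infty$ calculus passes to adjoints, but it is another sign that the two halves of the square-function characterisation cannot both be obtained by the route you describe.
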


Combining this result with Theorem \ref{charact0}, we show that the Kato property of $(\a, \H)$ is preserved under certain {\it triangular perturbations} of the imaginary part of $\a$, and in particular, that for every bounded, selfadjoint operators $\T$ and $\Q$ on a Hilbert space $\V$ such that $\T$ is triangular and $\Q$ is nonnegative and injective, the pair $(\T , \Q )$ has the Kato property, that is, $\Q (I-i\T )^{-1}$ is similar to an accretive operator on $\V$.

\begin{corollary} \label{main} 
 Let $\a$ and $\b$ be two sectorial forms on $\V$ with the same real parts, that is, $\Re \a = \Re \b$. Let the imaginary parts $\t_\a$ and $\t_\b$ of $\a$ and $\b$ be determined by selfadjoint operators $\T_\a$, $\T_\b\in\cL (\V )$, respectively. Assume that $(\b, \H )$ has the Kato property, and that $\T_\a - \T_\b$ is a triangular operator. Then $(\a, \H )$ has the Kato property, too.

In particular, if $\T_\a$ is a triangular operator, then $(\a, \H )$ has the Kato property for every Hilbert space $\H$ into which $\V$ is densely and continuously embedded. 
\end{corollary}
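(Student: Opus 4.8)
The plan is to deduce the Kato property of $(\a,\H)$ from that of $(\b,\H)$ by exhibiting a triangular perturbation relating the two and then applying Kalton's theorem (Lemma~\ref{Kalton}), using Theorem~\ref{charact0} to translate between the Kato property and the boundedness of an $H^\infty$ functional calculus. Write $\T_\a$, $\T_\b$ for the selfadjoint operators representing $\Im\a$, $\Im\b$, and let $\Q = \Q_\H$ be the operator associated with the embedding $\V\hookrightarrow\H$; since $\Q$ depends only on $\H$, it is common to both forms. By the equivalence (i)$\Leftrightarrow$(iii) of Theorem~\ref{charact0}, the hypothesis that $(\b,\H)$ has the Kato property means that $\B := (I-i\T_\b)^{-1}\Q$ has a bounded $H^\infty(\Sigma_\theta)$ functional calculus, and the desired conclusion for $(\a,\H)$ is exactly that $\A := (I-i\T_\a)^{-1}\Q$ has one as well. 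Both $\A$ and $\B$ are sectorial of angle $<\frac{\pi}{2}$: they are the Hilbert-space adjoints of $\Q(I+i\T_\a)^{-1}$ and $\Q(I+i\T_\b)^{-1}$, which are sectorial by Lemma~\ref{lem on sect}, and sectoriality is inherited by adjoints with the same angle.

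The key algebraic step is to factor $\A = (I+\Delta)\B$ without inverting $\Q$ (which is only injective and in general has no bounded inverse). I would set
\[
 \Delta := i\,(I-i\T_\a)^{-1}(\T_\a - \T_\b),
\]
so that a short computation gives $I+\Delta = (I-i\T_\a)^{-1}(I-i\T_\b)$, and therefore
\[
 (I+\Delta)\B = (I-i\T_\a)^{-1}(I-i\T_\b)(I-i\T_\b)^{-1}\Q = (I-i\T_\a)^{-1}\Q = \A .
\]
The point of placing $\Q$ on the right is precisely that the two factors $(I-i\T_\b)$ cancel before $\Q$ enters, so that $\Delta$ is built only from the bounded operators $(I-i\T_\a)^{-1}$ and $\T_\a-\T_\b$, with no occurrence of $\Q^{-1}$.

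It then remains to verify that $\Delta$ is triangular. For this I would use Kalton's characterisation (\cite[Theorem~5.5]{Ka07}, recalled in the text): triangularity is equivalent to $\sum_n s_n(\cdot)/(n+1)<\infty$. Because $s_n(XY)\le\|X\|\,s_n(Y)$, this class is a left ideal in $\cL(\V)$; since $\T_\a-\T_\b$ is triangular by assumption and $i(I-i\T_\a)^{-1}$ is bounded, $\Delta$ is triangular. Now $\B$ is sectorial with a bounded $H^\infty$ functional calculus, $\A$ is sectorial, and $\A=(I+\Delta)\B$, so Lemma~\ref{Kalton} yields that $\A$ has a bounded $H^\infty$ functional calculus; Theorem~\ref{charact0} then gives the Kato property of $(\a,\H)$. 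For the final assertion I would take $\b:=\Re\a$, i.e.\ $\T_\b=0$: then $\T_\b$ commutes with $\Q$, so $(\b,\H)$ has the Kato property by Corollary~\ref{cor 1}, while $\T_\a-\T_\b=\T_\a$ is triangular, and the first part applies for every admissible $\H$.

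The main obstacle is exactly the factorisation: one cannot form $\B^{-1}$ or $\A\B^{-1}$, so the argument depends on arranging the product so that $\Q$ cancels and the resulting $\Delta$ is a bona fide triangular operator. Once the factorisation is in place, the conclusion follows by assembling Theorem~\ref{charact0}, Lemma~\ref{lem on sect}, the ideal property of triangular operators, and Lemma~\ref{Kalton}, all of which are routine.
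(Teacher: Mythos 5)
Your proof is correct and follows essentially the same route as the paper's own (first) proof: it uses the identical factorisation $(I-i\T_\a)^{-1}\Q = (I+\Delta)\,(I-i\T_\b)^{-1}\Q$ with $\Delta = i(I-i\T_\a)^{-1}(\T_\a-\T_\b)$ (the paper obtains it from the second resolvent equation, you by direct cancellation), combined with Lemma \ref{lem on sect}, Kalton's perturbation theorem (Lemma \ref{Kalton}) and Theorem \ref{charact0}(iii), and the same reduction of the second statement to the symmetric form $\b=\Re\a$. Your explicit verifications --- sectoriality of $(I-i\T)^{-1}\Q$ via the adjoint of $\Q(I+i\T)^{-1}$, and triangularity of $\Delta$ via the ideal property of singular values --- merely flesh out steps the paper leaves implicit.
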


\begin{proof}[Proof of Corollary \ref{main}] 
Note that by the second resolvent equation we get 
\[(I-i\T_\a)^{-1}\Q - (I-i\T_\b)^{-1}\Q = i(I-i\T_\a)^{-1}(\T_\a -\T_\b)(I+i\T_\b)^{-1}\Q .
\]
Therefore, since the operator $i(I-i\T_\a)^{-1}(\T_\a -\T_\b)$ is triangular, the claim follows from Lemma \ref{lem on sect}, Lemma \ref{Kalton}, and Theorem \ref{charact0} (iii).

Alternatively, note that Arendt's result, Lemma \ref{Arendt charact}, used in the proof of Theorem \ref{charact0}, can be directly applied to get Corollary \ref{main}. Indeed, set $\Delta := \L_{\a ,\dual{\V}} \L_{\b ,\dual{\V}}^{-1} - I,$ so that $\L_{\a ,\dual{\V}} = (I+\Delta )\L_{\b, \dual{\V} }$.  By our assumption, $\L_{\b, \dual{\V} }$ admits a bounded $H^\infty$ functional calculus. We also recall that both $\L_{\a, \dual{\V} }$ and $\L_{\b, \dual{\V} }$ are sectorial operators. By Lemma \ref{Kalton}, it is thus sufficient to show that the operator $\Delta$ is triangular. Since $\Re \a = \Re \b$, we get $\Delta = i \, (\L_{\Im\a, \dual{\V} } - \L_{\Im\b, \dual{\V} }) \, \L_{\b, \dual{\V} }^{-1}$. Fix $u$ and $v$ in $\dual{\V}$.
Then
\begin{align*}
 \bracket{\Delta  u}{v}_{\dual{\V}} & = \bracket{I_\V \Delta u}{I_\V v}_\V \\
& = i[(\L_{\Im\a, \dual{\V} } - \L_{ \Im\b, \dual{\V} })\L_{\b, \dual{\V}}^{-1}u](I_\V v)  \\
& = i \, \Im\a\bigl((\L_{\b ,\dual{\V}}J)^{-1} u , \, I_\V v\bigr) - i \, \Im\b\bigl((\L_{\b ,\dual{\V}}J)^{-1} u , \,I_\V v\bigr) \\
& = i \bracket{(\L_{\b ,\dual{\V}}J)^{-1} u }{(\T_\a - \T_\b) I_\V v}_\V.
\end{align*}
Since $\L_{\b ,\dual{\V}}J$ is an isomorphism from $\V$ onto $\dual{\V}$, the triangularity of $\Delta$ is equivalent to the triangularity of $\T_\a - \T_\b$.

For the proof of the second statement, it is sufficient to apply the one just proved for a symmetric form, that is, $\b$ with $\Im \b = 0$.   
\end{proof}

In an analoguous way, by combining Lemma \ref{Kalton} with Theorem \ref{charact0} (iii), we get the following perturbation result for the real parts of forms.

\begin{corollary}
Let $\a$ and $\b$ be two sectorial forms on a space $\V$ with the same imaginary parts, that is, $\t_\a = \t_\b$, and equivalent real parts $\s_a$ and $\s_b$. Let $\S\in \cL(\V)$ be such that $\s_\a (\S u,v) = \s_\b (u.v)$.

If $\S-I$ is triangular and  $(\b, \H )$ has the Kato property, then $(\a, \H )$ has the Kato property, too. 
\end{corollary}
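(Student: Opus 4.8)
The plan is to mimic the second, Arendt-based proof of Corollary~\ref{main}, working directly with the operators $\L_{\a,\dual{\V}}$ and $\L_{\b,\dual{\V}}$ on $\dual{\V}$ rather than with the bounded operators of Theorem~\ref{charact0}. This sidesteps the awkwardness that $\a$ and $\b$ have different real parts and so cannot both be normalised to make $\Re$ equal to the inner product of $\V$. Throughout I fix $\s_\a = \bracket{\cdot}{\cdot}_\V$ as the inner product, so that $\S$ is the bounded, positive, selfadjoint operator determined by $\bracket{\S u}{v}_\V = \s_\b(u,v)$; since $\s_\a$ and $\s_\b$ are equivalent, $\S$ is invertible, and $\S-I$ is triangular if and only if $I-\S$ is, because the triangular operators form a linear subspace.

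First I would set $\Delta := \L_{\a,\dual{\V}}\L_{\b,\dual{\V}}^{-1} - I$, so that $\L_{\a,\dual{\V}} = (I+\Delta)\L_{\b,\dual{\V}}$. Both $\L_{\a,\dual{\V}}$ and $\L_{\b,\dual{\V}}$ are sectorial and share the domain $J(\V)$, and since $(\b,\H)$ has the Kato property, Lemma~\ref{Arendt charact} gives that $\L_{\b,\dual{\V}}$ has a bounded $H^\infty$ functional calculus. By Lemma~\ref{Kalton} it therefore suffices to prove that $\Delta$ is a triangular operator on $\dual{\V}$.

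The key step is the computation of $\Delta$. Because $\t_\a = \t_\b$, we have $\a - \b = \s_\a - \s_\b$, so $\L_{\a,\dual{\V}} - \L_{\b,\dual{\V}}$ is the operator associated with the symmetric form $\s_\a - \s_\b$, and $(\s_\a - \s_\b)(u,v) = \bracket{(I-\S)u}{v}_\V$. Carrying out the same chain of identities as in Corollary~\ref{main}, with $\Re\a - \Re\b$ in place of $\Im\a - \Im\b$, I expect to arrive, for $u,v\in\dual{\V}$, at
\[
\bracket{\Delta u}{v}_{\dual{\V}} = \bracket{(I-\S)(\L_{\b,\dual{\V}}J)^{-1}u}{I_\V v}_\V .
\]
Since $\L_{\b,\dual{\V}}J : \V \to \dual{\V}$ and $I_\V : \dual{\V} \to \V$ are isomorphisms, and the defining inequality \eqref{deftrian} for triangularity is invariant under pre- and post-composition of the operator with isomorphisms (as is already used implicitly in Corollary~\ref{main}), the triangularity of $\Delta$ is equivalent to that of $I-\S$. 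By hypothesis $I-\S$ is triangular, so $\Delta$ is triangular; Lemma~\ref{Kalton} then yields a bounded $H^\infty$ functional calculus for $\L_{\a,\dual{\V}}$, and Lemma~\ref{Arendt charact} delivers the Kato property of $(\a,\H)$.

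The main obstacle I anticipate is bookkeeping rather than anything deep: one must verify that $\Delta$ is genuinely bounded on $\dual{\V}$ (which follows from the displayed formula, since $I-\S$, $(\L_{\b,\dual{\V}}J)^{-1}$ and $I_\V$ are all bounded) and that triangularity transfers through the isomorphisms $\L_{\b,\dual{\V}}J$ and $I_\V$. The one genuinely delicate point is the normalisation: because only $\s_\a$ is identified with the inner product of $\V$, the operators $\T$, $\Q$ and $\S$ must all be read with respect to $\s_\a$, and care is needed so that $\s_\a - \s_\b$ is represented by $I-\S$ and not by a conjugated variant. If one instead prefers the route through Theorem~\ref{charact0}(iii), the same conclusion should follow by expressing the operator $\S^{-1}\Q(I - i\S^{-1}\T)^{-1}$ attached to $\b$ in terms of $\S^{-1}\Q$ and $\S^{-1}\T$ and exhibiting a triangular factor relating it to $\Q(I-i\T)^{-1}$; but the argument above is cleaner because it never leaves $\dual{\V}$.
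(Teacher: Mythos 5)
Your argument is correct, but it is not the route the paper takes for this corollary: you have adapted the second, Arendt-based proof of Corollary \ref{main}, whereas the paper argues entirely through the bounded operators of Theorem \ref{charact0}(iii). Concretely, the paper represents $\b$ by $(\T_\b,\Q_\b)$ with respect to $\s_\b$, notes that $\T_\a=\S\T_\b$ and $\Q_\a=\S\Q_\b$, hence $(I-i\T_\a)^{-1}\Q_\a=(I-i\S\T_\b)^{-1}\S\Q_\b$, and then performs \emph{two} successive triangular perturbations, each justified by Lemma \ref{Kalton}: first from $(I-i\T_\b)^{-1}\Q_\b$ to $(I-i\S\T_\b)^{-1}\Q_\b$ via the resolvent identity, the perturbing factor being $i(I-\S)\T_\b$ up to bounded factors, and then from $(I-i\S\T_\b)^{-1}\Q_\b$ to $(I-i\S\T_\b)^{-1}\S\Q_\b$, the factor being $\S-I$. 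Your proof instead works once and for all on $\dual{\V}$: the factorisation $\L_{\a,\dual{\V}}=(I+\Delta)\L_{\b,\dual{\V}}$, the identity $I_\V\Delta u=(I-\S)(\L_{\b,\dual{\V}}J)^{-1}u$ (which is indeed what the computation in Corollary \ref{main} yields with $\s_\a-\s_\b$ in place of $i(\t_\a-\t_\b)$), the transfer of triangularity through the isomorphisms $(\L_{\b,\dual{\V}}J)^{-1}$ and $I_\V$, a single application of Lemma \ref{Kalton}, and Lemma \ref{Arendt charact} at both ends; all of these steps check out. The comparison is instructive: your route needs sectoriality only for the two form operators $\L_{\a,\dual{\V}}$ and $\L_{\b,\dual{\V}}$, which is standard and recorded in the preliminaries, whereas the paper's first application of Lemma \ref{Kalton} also requires sectoriality of the mixed operator $(I-i\S\T_\b)^{-1}\Q_\b$, which Lemma \ref{lem on sect} does not literally cover (there $\S\T_\b$ and $\Q_\b$ are selfadjoint with respect to two \emph{different} equivalent inner products), so an extra justification is left tacit in the paper. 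Conversely, the paper's proof never leaves $\V$ and exhibits explicitly the bounded operator, namely $(I-i\T_\a)^{-1}\Q_\a$, that acquires the bounded $H^\infty$ calculus, in keeping with the operator-theoretic formulation of Theorem \ref{charact0}. Your normalisation bookkeeping ($\s_\a=\bracket{\cdot}{\cdot}_\V$, so that $\S$ is selfadjoint, positive and invertible, and $\s_\a-\s_\b$ is represented by $I-\S$) is exactly the right way to avoid the pitfall you identify.
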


\begin{proof}
According to Theorem \ref{charact0} (iii), if $(\H, \bracket{\cdot}{\cdot} )$ is a Hilbert space such that $(\b, \H)$ has the Kato property, then the operator $(I-i\T_\b)^{-1}\Q_\b$, where $\Q_\b$ is a nonnegative, injective operator on $(\V , \s_\b)$ with $\bracket{u}{v} = \s_\b (\Q_\b u, v)$ ($u$, $v\in \V$), has a bounded $H^\infty$-functional calculus. The corresponding operator $\Q_\a$ for the form $\a$ is equal to $\S\Q_\b$ and $\T_\a=\S\T_\b$. Hence, 
\[
(I-i\T_\a)^{-1}\Q_\a = (I - i\S\T_\b)^{-1} \S\Q_\b .  
\]
Then, since $(i\T_\b - i\S\T_\b)(I-i\T_\b)^{-1}\Q_\b $ is triangular and 
\[
 (I-i\S\T_\b)^{-1}\Q_\b - (I-i\T_\b)^{-1}\Q_\b = (I-i\S\T_\b)^{-1}(i\T_\b - i\S\T_\b)(I-i\T_\b)^{-1}\Q_\b 
\]  
the operator $(I-i\S\T_\b)^{-1}\Q_\b$ has a bounded $H^\infty$ functional calculus. 
Moreover, note that 
\begin{align*}
(I-i\S\T_\b)^{-1}\S\Q_\b - (I-i\S\T_\b)^{-1}\Q_\b & = (I-i\S\T_\b)^{-1}(\S- I) \Q_\b\\
& = \Delta (I-i\S\T_\b)^{-1}\Q_\b, 
\end{align*}
where 
\[
 \Delta = (I-i\S\T_\b)^{-1}(\S- I) (I-i\S\T_\b)
\] 
is triangular. Therefore, again by Lemma \ref{Kalton}, $(I - i\S\T_\b)^{-1} \S\Q_\b$ has a bounded $H^\infty$ functional calculus, which completes the proof. 
\end{proof}

Finally, for the sake of completeness, we state a perturbation result for the operator $\Q$ generating the Hilbert space $\H$ in $(\a, \H)$. Its proof follows directly from Lemma \ref{Kalton} and Theorem \ref{charact0}(iv).

\begin{corollary}\label{cor 3} 
Assume that $(\a, \H )$ has the Kato property. Let $\Q$ be the nonnegative, injective operator on $\V$ associated with $\H$. Then, $(\a, \H_{\hat{\Q}} )$ has the Kato property for every Hilbert space $\H_{\hat{\Q}}$ with  $\hat{\Q}$ being a triangular perturbation of $\Q$, that is, $\hat{\Q} = (I+\Delta)\Q$ for some triangular operator $\Delta$. 
\end{corollary}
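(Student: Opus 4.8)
The plan is to pass from the Kato property to the language of the bounded $H^\infty$ functional calculus via Theorem \ref{charact0}(iv), and then to invoke Kalton's perturbation theorem, Lemma \ref{Kalton}, for the multiplicative triangular perturbation relating $\Q$ and $\hat{\Q}$. Concretely, since $(\a ,\H )$ has the Kato property, Theorem \ref{charact0}(iv) gives that the operator $A := \Q (I+i\T )^{-1}\in\cL (\V )$ has a bounded $H^\infty$ functional calculus. Here $\T = \T_\a$ depends only on the form $\a$ and not on the target Hilbert space, which is precisely why the same $\T$ reappears when we pass to $(\a ,\H_{\hat{\Q}})$.

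Next I would identify the operator that Theorem \ref{charact0}(iv) attaches to the pair $(\a ,\H_{\hat{\Q}})$. As $\a$ is unchanged and $\hat{\Q}$ is the nonnegative, injective operator associated with $\H_{\hat{\Q}}$, this operator is $\hat{A} := \hat{\Q}(I+i\T )^{-1}$. The factorisation $\hat{\Q}=(I+\Delta )\Q$ then gives
\[
\hat{A} = (I+\Delta )\,\Q (I+i\T )^{-1} = (I+\Delta )A,
\]
so that $\hat{A}$ is a multiplicative triangular perturbation of $A$ of exactly the form treated in Lemma \ref{Kalton}. To apply that lemma I would check its hypotheses: the perturbation $\Delta$ is triangular by assumption; $A$ has a bounded $H^\infty$ functional calculus by the previous step; and both $A$ and $\hat{A}$ are sectorial, which is the content of Lemma \ref{lem on sect} applied to the nonnegative operators $\Q$ and $\hat{\Q}$, respectively. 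Lemma \ref{Kalton} then yields that $\hat{A}=\hat{\Q}(I+i\T )^{-1}$ has a bounded $H^\infty$ functional calculus, and reading Theorem \ref{charact0}(iv) in the converse direction for $(\a ,\H_{\hat{\Q}})$ shows that $(\a ,\H_{\hat{\Q}})$ has the Kato property.

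The argument is short, and the only step requiring genuine care is the sectoriality of $\hat{A}$, since Lemma \ref{Kalton} presupposes that both operators are sectorial. This rests on $\hat{\Q}$ being selfadjoint, nonnegative and injective, which is exactly what is needed for $\H_{\hat{\Q}}$ to be a well-defined Hilbert space into which $\V$ embeds densely and continuously; thus the standing hypothesis that $\H_{\hat{\Q}}$ is a Hilbert space is precisely what licenses the application of Lemma \ref{lem on sect} to $\hat{\Q}$. Once this is in place, no further estimates are required, and the chain Theorem \ref{charact0}(iv) $\to$ Lemma \ref{lem on sect} $+$ Lemma \ref{Kalton} $\to$ Theorem \ref{charact0}(iv) closes the proof.
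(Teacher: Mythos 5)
Your proof is correct and takes exactly the route the paper intends: the paper's entire proof of Corollary \ref{cor 3} is the remark that it ``follows directly from Lemma \ref{Kalton} and Theorem \ref{charact0}(iv)'', and your write-up supplies precisely those details --- the identification $\hat{A}=(I+\Delta)A$ with $A=\Q(I+i\T)^{-1}$, the observation that $\T$ is unchanged when only the target space varies, and the sectoriality of both operators via Lemma \ref{lem on sect} (which the paper leaves implicit, just as in its proof of Corollary \ref{main}).
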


\section{Optimality of Corollary \ref{main}}
Below, we show that the class of triangular operators is, in a sense, the largest subclass of compact operators for which Corollary \ref{main} holds. Recall that, by \cite[Theorem 5.5]{Ka07}, a compact operator $\T$ is not triangular, if $\sum_{n\in \N}\frac{s_n(\T)}{n} = \infty$, where $s_n(\T)$ (${n\in \N}$) stands for the $n$-th singular value of $\T$.

\begin{proposition}\label{counterex}
Let $(a_n)_{{n\in \N}}$ be a nonincreasing sequence of positive numbers with $\sum_{{n\in \N}}\frac{a_n}{n}=\infty$. 
Then, there exists a sectorial form $\a$ such that the singular values $(s_n(\T_\a))_{{n\in \N}}$ of the operator $\T_\a$ determined by the imaginary part of $\a$ satisfy $s_n(\T ) \preceq a_n$ $({n\in \N})$, but not for every Hilbert space $\H$ for which  $\V$ is densely and continuously embedded in $\H$, the couple $(\a, \H)$ has the Kato property.   

Equivalently, there exist a selfadjoint, compact operator $\T$ on a Hilbert space $\V$ with $s_n(\T )\preceq a_n$  $({n\in \N})$, and a nonnegative, injective operator $\Q$ on $\V$ such that $\Q(I+i\T )^{-1}$ is not similar to an accretive operator.
\end{proposition}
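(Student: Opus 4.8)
\emph{The plan.} By Theorem \ref{charact0} the two displayed assertions in the Proposition are equivalent (this is precisely the equivalence (i)$\Leftrightarrow$(iv) applied to the pair $(\T_\a,\Q)$), so it suffices to prove the second one: I want to produce a compact, selfadjoint operator $\T$ on a Hilbert space $\V$ with $s_n(\T)\preceq a_n$ together with a nonnegative, injective $\Q\in\cL(\V)$ such that $A:=\Q(I+i\T)^{-1}$ is not similar to an accretive operator. By Lemma \ref{lem on sect} the operator $A$ is in any case sectorial of some angle $<\frac{\pi}{2}$, so by the last clause of Theorem \ref{charact0}(iv) this amounts to showing that $A$ admits no bounded $H^\infty(\Sigma_\theta)$ functional calculus for any $\theta<\frac{\pi}{2}$. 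Once such a pair $(\T,\Q)$ is found, the form $\a:=\a_\T$ and the space $\H:=\H_\Q$ witness the failure of the Kato property, which gives the first assertion.

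\emph{Reduction to finite blocks.} I would build everything block-diagonally: take $\V=\bigoplus_k\V_k$ as an $\ell^2$-sum of finite-dimensional spaces $\V_k=\C^{N_k}$ and look for $\T=\bigoplus_k\T_k$ and $\Q=\bigoplus_k\Q_k$ with $\sup_k(\|\T_k\|+\|\Q_k\|)<\infty$, so that $A=\bigoplus_k A_k$ with $A_k:=\Q_k(I+i\T_k)^{-1}$, all of a common sectoriality angle $<\frac{\pi}{2}$ (uniform because $\T$ is bounded). Since for an orthogonal direct sum one has $f(A)=\bigoplus_k f(A_k)$ and hence $\|f(A)\|=\sup_k\|f(A_k)\|$, the operator $A$ fails to have a bounded $H^\infty(\Sigma_\theta)$ calculus, for every fixed $\theta<\frac{\pi}{2}$, as soon as the $H^\infty(\Sigma_\theta)$-calculus constants of the finite-dimensional blocks $A_k$ are unbounded in $k$. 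The task is thereby reduced to constructing finite blocks on which the calculus constant is forced to be large, while keeping the singular values and the operators $\Q_k$ under control.

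\emph{The finite-dimensional mechanism and the crux.} The source of the blow-up should be the logarithmic unboundedness of the triangular truncation on $N\times N$ matrices, which is exactly the phenomenon detected by Kalton's characterisation in \cite{Ka07}: failure of triangularity corresponds to divergence of the weighted sum $\sum_n \frac{s_n}{n}$. Concretely, on $\V_k$ I would take $\Q_k$ a fixed strictly positive matrix (so that it carries a uniformly bounded calculus of its own) and build $\T_k$ from a truncation-type generator whose entries are weighted by the relevant values of $(a_n)$, aiming at a lower bound of the form (calculus constant of $A_k$) $\geq c\sum_{n\in I_k}\frac{a_n}{n}$, where $I_k$ indexes the eigenvalues placed in block $k$. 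Since $\sum_n \frac{a_n}{n}=\infty$, the index set $\N$ can be partitioned into successive finite blocks $I_k$ on each of which this weighted sum is as large as we please, forcing the block constants to diverge. This quantitative lower bound is the step I expect to be the main obstacle: one must pin the growth of the $H^\infty$ bound (equivalently, via the Cayley transform of Lemma \ref{Ritt cond}, of the polynomial bound) on each block precisely to the divergent series $\sum\frac{a_n}{n}$. A convenient alternative is to invoke the sharpness half of Kalton's perturbation theorem underlying Lemma \ref{Kalton}: writing $(I-i\T)^{-1}\Q=(I+\Delta)\Q$ with $\Delta=i(I-i\T)^{-1}\T$ (so that, the triangular operators being an ideal with $s_n(\Delta)\asymp s_n(\T)$, $\Delta$ is triangular if and only if $\T$ is), a non-triangular $\Delta$ together with a suitable base $\Q$ of bounded calculus destroys the calculus of $(I-i\T)^{-1}\Q$, as in Theorem \ref{charact0}(iii); the work then shifts to realising Kalton's base operator as a positive selfadjoint $\Q$ and his perturbation in the special shape $i(I-i\T)^{-1}\T$ with $\T$ selfadjoint.

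\emph{Assembling the example.} Finally I would carry out the global bookkeeping: arrange the block eigenvalues of $\T$ so that, after sorting in nonincreasing order, the singular value sequence of the whole operator satisfies $s_n(\T)\preceq a_n$ (monotonicity of $(a_n)$ leaves room to interleave the per-block eigenvalues); confirm that $\T$ is compact and selfadjoint and that $\Q=\bigoplus_k\Q_k$ is bounded, nonnegative, and injective, the last point requiring each $\Q_k$ to be strictly positive with spectra bounded below by quantities that may tend to $0$ but never vanish. With $\sup_k\|\cdot\|_{H^\infty(\Sigma_\theta),A_k}=\infty$ from the previous step, $A$ has no bounded $H^\infty(\Sigma_\theta)$ calculus for any $\theta<\frac{\pi}{2}$, hence is not similar to an accretive operator, which completes the construction and thereby both assertions of the Proposition.
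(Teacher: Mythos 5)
Your reduction to the second assertion and your block-diagonal framework are fine as far as they go, but the proof has a genuine gap exactly where you flag it yourself: the quantitative lower bound (calculus constant of $A_k$) $\geq c\sum_{n\in I_k}\frac{a_n}{n}$ for the finite blocks $A_k=\Q_k(I+i\T_k)^{-1}$ is never established, and no mechanism for it is given. This is not a routine estimate: the operator whose calculus must blow up has the constrained shape $\Q(I+i\T)^{-1}$ with $\T$ selfadjoint of prescribed singular values and $\Q$ positive, whereas the known link between non-triangularity and failure of the $H^\infty$ calculus (Kalton's Theorem 8.3 in \cite{Ka07}, restated as Lemma \ref{Kalton result}) produces counterexamples of a quite different shape, namely $(I+2^{-m}WKU)\S$ with $W$, $U$ arbitrary bounded operators and $\S$ the \emph{unbounded} diagonal operator $\S e_n=2^n e_n$. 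Your fallback route (``invoke the sharpness half of Kalton's perturbation theorem'') names the right deep input but stops precisely where the work begins: as you note, one must realise Kalton's base operator and perturbation inside the constrained $(\T,\Q)$ picture, and you do not do so.

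The paper resolves this mismatch not by forcing Kalton's operators into the shape $\Q(I+i\T)^{-1}$, but by changing pictures: it works with the regular accretive form $\a(u,v)=\bracket{A\S u}{\S v}_\H$ of Auscher--McIntosh--Nahmod, taking $A:=I+2^{-m}WKU$ with $K$ compact, non-triangular, $s_n(K)\preceq a_n$, and $m$ fixed so large that the numerical range of $A$ lies in $\{|z-1|<1\}$. Lemma \ref{AMN result} (AMN, Theorem 10.1) states that $(\a,\H)$ has the Kato property if and only if $A\S$ has a bounded $H^\infty$ calculus, and Kalton's Lemma \ref{Kalton result} says $A\S$ has no bounded calculus; so the Kato property fails with no block analysis at all. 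The remaining point --- that the bounded operator $\T_\a=\S^{-1}(\Re A)^{-1}\Im A\,\S$ representing $\Im\a$ on $\V=\cD(\S)$ has the right singular values --- is exactly the content of Lemma \ref{singular values}(i), which gives $s_n(\T_\a)\simeq s_n(\Im A)\preceq s_n(K)\preceq a_n$ despite the conjugation by the unbounded $\S$. To complete your argument you would have to either prove your per-block lower bound (for which no elementary route is apparent) or carry out this transfer through the form representation; as written, the proposal is a plan whose crucial step is conjectured rather than proved.
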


In order to construct an example we adapt two related results from \cite{Ka07} and \cite{AuMcNa97I}. 
Recall that, in \cite{AuMcNa97I}, the sesquilinear form $\a$ on a Hilbert space $\H$ is expressed as
\begin{equation}\label{repr1}
\a(u,v) = \bracket{A\S u}{\S v}_\H , \quad u,v\in \V:=\cD(\S),  
\end{equation}
where $\S$ is a positive selfadjoint (not neccessarily bounded) operator on $\H$, and $A$ is a bounded invertible $\theta$-accretive operator on $\H $ for some $\theta<\pi/2$. (Here, we call the selfadjoint operator $\S$ on $\H$ \emph{positive} if $\bracket{\S u}{u}_\H >0$ for all $u\in \cD(A)\setminus\{0\}$.) Then, following Kato's terminology \cite{Kat61}, $\a$ is a {\it regular accretive form} in $\H$.
The operator $\L_{\a,\H}$ associated with the form $\a$ on $\H$ is given by $\S A\S$. Note that $\s = \Re \a$ is an equivalent inner product to $\bracket{\S\cdot}{\S\cdot}_\H$, and in order to put it in our setting, we additionally assume that $0\in \rho(\S)$. Then $\s$ is a \emph{complete} inner product on $\V:=\cD(\S)$. In fact, since $\S$ is selfadjoint, to get the completeness of this inner product, it is sufficient that $\S$ is injective and has closed range.   

For the convenience of the reader we restate two auxiliary results from \cite{Ka07} and \cite{AuMcNa97I}.

\begin{lemma}[{\cite{Ka07}, Theorem 8.3}] \label{Kalton result}
Let $\H $ be a separable Hilbert space and let $(e_n)_{{n\in \N}}$ be
an orthonormal basis. Let $\S$ be the sectorial operator defined by
$\S e_n=2^ne_n$ $({n\in \N})$ with $\cD(\S):=\{x\in \H : \sum_{n=1}^\infty 2^{2n}|\bracket{x}{
e_n}_\H |^2 <\infty\}$. Suppose $K\in \cL(\H )$ is a non-triangular
compact operator. 
Then, there exist bounded operators $U$ and $W$ on $\H$ such that for every
$m\in \N$, the operator $(I+2^{-m}WKU)\S$ fails to have a bounded $H^\infty$ functional calculus.
\end{lemma}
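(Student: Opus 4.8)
The plan is to derive the failure of the $H^\infty$ calculus from the unboundedness of triangular truncation, using decisively that the spectrum $\{2^n:n\in\N\}$ of $\S$ is lacunary. The natural tool is McIntosh's quadratic-estimate characterisation on Hilbert space \cite{Mc86}: a sectorial operator $A$ on $\H$ has a bounded $H^\infty$ calculus if and only if, for some nontrivial $\psi\in H^\infty(\Sigma_{\theta'})$ with suitable decay at $0$ and $\infty$,
\[
\int_0^\infty \|\psi(tA)x\|_\H^2 \, \frac{\ud t}{t} \approx \|x\|_\H^2 \qquad (x\in\H).
\]
For the unperturbed $\S$ the estimate holds trivially and scale-independently, since $\psi(t\S)x=\sum_n\psi(t2^n)\bracket{x}{e_n}_\H e_n$ gives $\int_0^\infty\|\psi(t\S)x\|_\H^2\,\ud t/t=c_\psi\|x\|_\H^2$. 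The role of lacunarity is that at a dyadic scale $t\approx 2^{-k}$ the factor $\psi(t2^n)$ is sharply concentrated near $n=k$, so each scale localises essentially onto the single spectral projection $P_k:=\bracket{\cdot}{e_k}_\H\, e_k$, with only finitely many neighbouring frequencies contributing. Distinct scales are thereby cleanly separated, which is what allows a perturbation to be read off as a triangular structure.

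First I would make the perturbative reduction precise. Writing $A=(I+\Delta)\S$ with $\Delta:=2^{-m}WKU$, I would expand $\psi(tA)$ in a Neumann/DuHamel series in $\Delta$ and isolate the linear term. Using the resolvent identity, the divided differences $\bigl(\psi(t2^k)-\psi(t2^j)\bigr)/(2^k-2^j)$, and the lacunarity of $\{2^n\}$ (which bounds the $\S$-weights and divided-difference kernels by uniform constants for $j\neq k$), the linear contribution, tested against the basis, reproduces up to uniformly bounded factors the strictly-triangular part of the matrix $\bigl(\bracket{\Delta e_j}{e_k}_\H\bigr)_{j,k}$ relative to the ordering of the eigenvalues $2^n$. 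Consequently, if $A$ had a bounded $H^\infty$ calculus, the quadratic estimate combined with this expansion would force a uniform bound for the triangular-truncation (bilinear) norm of $\Delta$ over all finite sections. By Kalton's singular-value criterion \cite[Theorem 5.5]{Ka07}, such a uniform bound is exactly equivalent to $\Delta$ being triangular, i.e. to $\sum_n s_n(\Delta)/n<\infty$.

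Now I would exploit the hypothesis $\sum_n s_n(K)/n=\infty$ to build $U$ and $W$ realising the worst case. Starting from the singular value decomposition of $K$, I would take $U$ and $W$ to be isometries built from its singular system, whose two-sided action aligns and rearranges the singular vectors of $K$ against the fixed basis $(e_n)$ into the diverging triangular configuration permitted precisely by $\sum_n s_n(K)/n=\infty$; concretely, $WKU$ is arranged so that its strictly-triangular truncation has norm tending to $\infty$ along a sequence of finite sections. Since singular values, hence triangularity and the quantitative rate of its failure, are invariant under the scaling $K\mapsto 2^{-m}K$, the operator $\Delta=2^{-m}WKU$ is non-triangular for \emph{every} $m\in\N$; the factor $2^{-m}$ serves only to keep $I+\Delta$ invertible, so that $A=(I+\Delta)\S$ is a genuine sectorial operator close to $\S$. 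By the preceding paragraph, $(I+2^{-m}WKU)\S$ then cannot admit a bounded $H^\infty$ calculus, for any $m$.

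The hard part will be the rigorous control of the full perturbation series rather than just its linear term: one must show that the higher-order terms cannot conspire to cancel the first-order divergence, and that the scale-coupling estimates survive the passage from the continuous square-function integral to the discrete dyadic sum dictated by lacunarity. This is exactly where the geometry of the construction of $U$ and $W$ must be matched quantitatively to the truncation criterion, and it constitutes the technical core of \cite[Theorem 8.3]{Ka07}.
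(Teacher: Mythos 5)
There is nothing in the paper to compare your argument against: the paper does not prove this lemma, it imports it verbatim as \cite[Theorem 8.3]{Ka07}. Judged as a standalone proof attempt, your proposal has a genuine gap, and you name it yourself in your final paragraph. The entire argument hinges on the claim that a bounded $H^\infty$ calculus for $A=(I+\Delta)\S$, $\Delta = 2^{-m}WKU$, forces a uniform bound on the triangular truncations of $\Delta$ relative to the ordered eigenbasis of $\S$; but you establish this only for the \emph{linear} term of the perturbation expansion of $\psi(tA)$. Nothing in the proposal controls the higher-order terms, and the contradiction cannot be extracted from the first-order term alone: the quadratic and higher terms are evaluated on the very same finite sections along which the truncation diverges, so for fixed $m$ they may diverge as well (possibly with cancellation against the linear term). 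Proving that the full series inherits the first-order divergence is precisely the content of Kalton's Theorems 8.1--8.3; ending the argument by saying this step ``constitutes the technical core of \cite[Theorem 8.3]{Ka07}'' makes the proposal a citation with heuristics attached, not a proof.

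A second, smaller soft spot is the construction of $U$ and $W$. What the failure of triangularity of $K$ hands you, via \cite[Theorem 5.5]{Ka07}, is the unboundedness of the bilinear expression \eqref{deftrian}: finite systems $(u_j)$, $(v_k)$ with controlled unimodular-sum norms but divergent double sums. The bounded operators $U$ and $W$ must be manufactured from these witnessing systems (sending the basis vectors $e_j$ essentially onto the $u_j$ and the $v_k$, with the ordering matched to the ordering of the eigenvalues $2^n$), not from the singular value decomposition of $K$; the condition $\sum_n s_n(K)/n=\infty$ enters only through the equivalence with failure of \eqref{deftrian}, and your phrase ``aligns and rearranges the singular vectors \ldots into the diverging triangular configuration'' asserts rather than proves that such an alignment by \emph{bounded} operators exists. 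So the general strategy (quadratic estimates, lacunarity localising scales onto spectral projections, reduction to triangular truncation) is the right one and indeed close in spirit to Kalton's own, but both constructive/estimative steps that turn it into a theorem are missing.
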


\begin{lemma}[{\cite{AuMcNa97I}, Theorem 10.1}] \label{AMN result} 
Let $A$, $\S$, $\a$ have the properties specified above. Then $(\a, \H )$ has the Kato property if and only if the operator $A\S$ has a bounded $H^\infty$ functional calculus.
\end{lemma}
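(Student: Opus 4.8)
The plan is to read this off Theorem \ref{charact0}(iv) (equivalently Lemma \ref{Arendt charact}), rather than to reproduce the quadratic-estimate machinery of \cite{AuMcNa97I}. The first task is to place the regular accretive form in the normalisation of Subsection 1.2. Since $A$ is invertible and $\theta$-accretive, $\Re A=\tfrac12(A+A^*)\geq cI$ for some $c>0$, so
\[
\bracket{u}{v}_\V:=\s(u,v)=\Re\a(u,v)=\bracket{\Re A\,\S u}{\S v}_\H
\]
is a complete inner product on $\V=\cD(\S)$ (this is where $0\in\rho(\S)$ is used), and $\a(u,v)=\bracket{(I+i\T)u}{v}_\V$ with $\s$ as the inner product. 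The key observation is that $\Phi:=(\Re A)^{1/2}\S$ is a \emph{unitary} operator from $(\V,\bracket{\cdot}{\cdot}_\V)$ onto $\H$; conjugation by $\Phi$ therefore transports operators on $\V$ to operators on $\H$ while preserving accretivity and the boundedness of the $H^\infty$ calculus.

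Next I would compute the pair $(\T,\Q)$ associated with $(\a,\H)$ from \eqref{H and Q} and \eqref{a and T}. A direct calculation gives $\T=\S^{-1}(\Re A)^{-1}(\Im A)\,\S$ and $\Q=(\S\,\Re A\,\S)^{-1}$ on $\V$, whence
\[
\Phi(I+i\T)\Phi^{-1}=(\Re A)^{-1/2}A(\Re A)^{-1/2},\qquad \Phi\Q\Phi^{-1}=(\Re A)^{-1/2}\S^{-2}(\Re A)^{-1/2}.
\]
Multiplying these, and then conjugating once more by the bounded, boundedly invertible operator $(\Re A)^{1/2}$ (which again preserves the $H^\infty$ calculus), shows that the operator $\Q(I+i\T)^{-1}$ of Theorem \ref{charact0}(iv) is similar on $\H$ to $(A\S^2)^{-1}$. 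By Lemma \ref{lem on sect} the operator $\Q(I+i\T)^{-1}$ is sectorial of some angle $<\tfrac\pi2$, hence so is the similar operator $(A\S^2)^{-1}$, and therefore $A\S^2$ is sectorial; since a sectorial operator has a bounded $H^\infty$ functional calculus if and only if its inverse does, Theorem \ref{charact0}, (i)$\Leftrightarrow$(iv), gives exactly that $(\a,\H)$ has the Kato property if and only if $A\S^2=(A\S)\,\S$ has a bounded $H^\infty$ functional calculus.

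The step that needs care — and the main obstacle — is the transport of the functional calculus. One is tempted to argue directly from $\L_\H=\S A\S$ via $\S^{-1}\L_\H\S=A\S^2$; but although $\S^{-1}$ is bounded it is not boundedly invertible, so this similarity does \emph{not} carry the $H^\infty$ calculus, consistently with the fact that $\L_\H$ always has a bounded $H^\infty$ calculus on $\H$ while $A\S^2$ need not. The transport must instead be routed through the genuine unitary $\Phi:(\V,\s)\to\H$ together with the bounded, boundedly invertible similarity $(\Re A)^{1/2}$, both of which do preserve the calculus. Alternatively, one may follow the original route of \cite{AuMcNa97I}: rewrite the Kato identity $\|\L_\H^{1/2}u\|_\H\approx\|\S u\|_\H$ as two-sided square-function estimates for $A\S^2$ and its adjoint and invoke McIntosh's theorem \cite{Mc86}; there the delicate point is precisely the passage from the square-root domain equality to the quadratic estimates, which is where the Hilbert-space geometry is essential.
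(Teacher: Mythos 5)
The paper does not prove this lemma at all --- it is imported verbatim as Theorem 10.1 of \cite{AuMcNa97I} --- so what has to be judged is whether your derivation from Theorem \ref{charact0} is a valid substitute. Its computational core is correct: your formulas $\T=\S^{-1}(\Re A)^{-1}(\Im A)\S$ and $\Q=\S^{-1}(\Re A)^{-1}\S^{-1}$ agree with Lemma \ref{singular values}, the map $\Phi=(\Re A)^{1/2}\S$ is unitary from $(\V,\s)$ onto $\H$, and conjugating by $\Phi$ and then by $(\Re A)^{1/2}$ (equivalently, by the single bounded, boundedly invertible operator $(\Re A)\S:\V\to\H$) shows that $\Q(I+i\T)^{-1}$ is similar to $\S^{-2}A^{-1}=(A\S^2)^{-1}$. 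Since such similarities, and inversion of an invertible sectorial operator, preserve the bounded $H^\infty$ calculus, Theorem \ref{charact0}, (i)$\Leftrightarrow$(iv), indeed yields: $(\a,\H)$ has the Kato property if and only if $A\S^2$ has a bounded $H^\infty$ functional calculus on $\H$. (This is really Arendt's Lemma \ref{Arendt charact} in disguise: $A\S^2$ is the dual realisation $\L_{\a,\dual{\V}}$ transported to $\H$.)

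The gap is that this is not the statement of the lemma. The lemma characterises the Kato property by the calculus of the ``first order'' operator $A\S$ (domain $\cD(\S)$), whereas you obtain the ``second order'' operator $A\S^2$ (domain $\cD(\S^2)$), and your closing identification ``$A\S^2=(A\S)\,\S$'' does not bridge the two: boundedness of the $H^\infty$ calculus is not stable under right multiplication by $\S$; $A\S^2$ is not a function of $A\S$ (note $(A\S)^2=A\S A\S\neq A\S^2$ unless $A$ and $\S$ commute); and the only similarities connecting $A\S$, $A\S^2$ and $\L_\H=\S A\S$ have unbounded intertwiners ($\S$ or $\S^{1/2}$), which --- as you yourself observe --- do not transport the calculus. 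The equivalence between bounded calculus of $A\S^2$ and bounded calculus of $A\S$ is exactly the nontrivial content of \cite[Theorem 10.1]{AuMcNa97I}, established there via quadratic estimates; it is also the form this paper actually needs, since the proof of Proposition \ref{counterex} combines the lemma with Kalton's Lemma \ref{Kalton result}, which produces failure of the calculus for the operator $A\S$, not for $A\S^2$. Your ``alternative'' closing route suffers from the same defect, because the square-function estimates you propose again concern $A\S^2$. So the proposal proves a correct and relevant characterisation, but not the stated lemma; the passage from $A\S^2$ to $A\S$ is missing, and it cannot be supplied by similarity bookkeeping.
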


\begin{lemma}\label{singular values} 
Let $A$, $\S$, $\a$ have the properties specified above. Let $\T$ and $\Q$ be the operators associated with $\a$ and $\H$. 
\begin{itemize}
 \item [(i)] The operator $\T\in \cL(\V)$ is compact if and only if the operator $\Im A \in \cL(\H)$ is compact. Then, 
$s_n(\T)\simeq s_n(\Im A)$ ($n\in \N$), that is,  there exists $c>0$ such that $c^{-1} s_n(\T)\leq s_n(\Im A) \leq c s_n(\T)$ for every $n\in \N$. 
 \item [(ii)] The operator $\Q  \in \cL (\V)$ is compact if and only if the embedding of $\V$ into $\H$ is compact, if and only if $\S^{-1}\in \cL(\H)$ is compact. Then,  $s_n(\Q) \simeq s_n(\S^{-1}) \simeq s_n(j)$ ($n\in \N$) where $j$ denotes the canonical embedding of $\V$ into $\H$. 
\end{itemize}
\end{lemma}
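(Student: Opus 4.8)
The plan is to compute the two operators $\T$ and $\Q$ explicitly from the representation $\a(u,v)=\bracket{A\S u}{\S v}_\H$, and then to reduce every singular-value comparison to the \emph{ideal property} of singular values, namely $s_n(XKY)\leq\|X\|\,\|Y\|\,s_n(K)$, applied to the bounded, boundedly invertible factors that appear. First I would record that $\Re\a(u,v)=\bracket{(\Re A)\S u}{\S v}_\H$ and $\Im\a(u,v)=\bracket{(\Im A)\S u}{\S v}_\H$, where $\Re A,\Im A\in\cL(\H)$ are the selfadjoint real and imaginary parts of $A$. Since $\s=\Re\a$ is, by hypothesis, equivalent to $\bracket{\S\cdot}{\S\cdot}_\H$, the selfadjoint operator $\Re A$ is positive definite and boundedly invertible on $\H$. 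Solving $\t(u,v)=\bracket{\T u}{v}_\V$ and $\bracket{u}{v}_\H=\bracket{\Q u}{v}_\V$ (with $\bracket{\cdot}{\cdot}_\V=\s$), and using that $\S:\cD(\S)\to\H$ is a bijection with bounded inverse so that $\S u$ ranges over all of $\H$, then yields the formulas $\T=\S^{-1}(\Re A)^{-1}(\Im A)\S$ and $\Q=\S^{-1}(\Re A)^{-1}\S^{-1}$.

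The device that makes this clean is that $\S$ is a \emph{unitary} from $(\V,\s)$ onto $\H$ equipped with the equivalent inner product $\bracket{\cdot}{\cdot}_{\Re A}:=\bracket{(\Re A)\cdot}{\cdot}_\H$; indeed $\|\S u\|_{\Re A}^2=\bracket{(\Re A)\S u}{\S u}_\H=\s(u,u)$. Singular values are unitary invariants, and since $\bracket{\cdot}{\cdot}_{\Re A}$ is equivalent to $\bracket{\cdot}{\cdot}_\H$ the singular values computed in the two structures are comparable (again by the ideal property). For (i), transporting $\T$ through this unitary turns it into $B:=(\Re A)^{-1}(\Im A)$, which is selfadjoint on $(\H,\bracket{\cdot}{\cdot}_{\Re A})$ and is similar, via $(\Re A)^{1/2}$, to the genuinely selfadjoint operator $(\Re A)^{-1/2}(\Im A)(\Re A)^{-1/2}$ on $\H$. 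Hence $s_n(\T)=s_n\bigl((\Re A)^{-1/2}(\Im A)(\Re A)^{-1/2}\bigr)$, and the ideal estimate with $X=Y=(\Re A)^{\mp 1/2}$ gives $s_n(\T)\simeq s_n(\Im A)$; in particular $\T$ is compact if and only if $\Im A$ is.

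For (ii) I would argue in the same spirit. The inclusion $j:(\V,\s)\hookrightarrow(\H,\bracket{\cdot}{\cdot}_\H)$, read through the unitary $\S$, becomes the map $\S^{-1}$ from $(\H,\bracket{\cdot}{\cdot}_{\Re A})$ to $(\H,\bracket{\cdot}{\cdot}_\H)$; as the inner products on the source are equivalent, $s_n(j)\simeq s_n(\S^{-1})$, so $j$ is compact exactly when $\S^{-1}$ is (equivalently, when the embedding $\V\hookrightarrow\H$ is compact). Since $\Q=j^*j$ one has $s_n(\Q)=s_n(j)^2$, which ties the compactness of $\Q$ to that of $j$ and of $\S^{-1}$ and yields the asserted equivalences; the same conclusion can be reached directly by transporting $\Q$ through $\S$ to $(\Re A)^{-1}\S^{-2}$ and comparing it with $\S^{-2}$ via the ideal estimate.

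The routine parts---differentiating the representation to read off $\Re A$ and $\Im A$, and the ideal inequality for singular values---are standard. The point that needs care, and which I expect to be the main obstacle, is the unboundedness of $\S$: the similarities and the ``unitary transport'' must be set up on the correct domains. All the operators that are actually normed, namely $\S^{-1}$, $(\Re A)^{\pm 1/2}$ and $\Im A$, are bounded, so the ideal estimates legitimately apply, but one must verify that the manipulations such as $\S\Q\S^{-1}=(\Re A)^{-1}\S^{-2}$ are genuine identities between bounded operators and not merely formal. Pinning down the positivity and bounded invertibility of $\Re A$ from the coercivity hypothesis, which is used throughout, is the remaining ingredient.
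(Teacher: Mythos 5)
Your formulas $\T=\S^{-1}(\Re A)^{-1}(\Im A)\S$ and $\Q=\S^{-1}(\Re A)^{-1}\S^{-1}|_{\V}$ are exactly the ones the paper starts from, and your part (i) is correct and follows the paper's route: the paper deduces $s_n(\T)\simeq s_n(\Im A)$ from this factorisation and the ideal property of singular values (citing Weidmann), while you make the same transport rigorous by observing that $\S$ is unitary from $(\V,\s)$ onto $\H$ with the equivalent inner product $\bracket{(\Re A)\cdot}{\cdot}_\H$, and that $(\Re A)^{1/2}$ is unitary from that space onto $\H$, so that $\T$ is unitarily equivalent to $(\Re A)^{-1/2}(\Im A)(\Re A)^{-1/2}$. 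The compactness equivalences in (ii) are also correct.

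The singular-value claim in (ii), however, is where your proposal breaks down, and in a revealing way: your own, correct, identities contradict the conclusion you then assert. You establish $s_n(j)\simeq s_n(\S^{-1})$ and $s_n(\Q)=s_n(j)^2$ (both true), and these give $s_n(\Q)\simeq s_n(j)^2\simeq s_n(\S^{-1})^2$, not the asserted chain $s_n(\Q)\simeq s_n(\S^{-1})\simeq s_n(j)$; when the embedding is compact, $s_n(j)\to 0$, so $s_n(j)^2$ and $s_n(j)$ can never be comparable with uniform constants. Concretely, take $A=I$ and $\S e_n=\mu_n^{-1}e_n$ with $\mu_n\downarrow 0$: then $f_n:=\mu_n e_n$ is an orthonormal basis of $(\V,\s)$, $\Q f_n=\mu_n^2 f_n$, hence $s_n(\Q)=\mu_n^2$, while $s_n(j)=s_n(\S^{-1})=\mu_n$. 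So your closing sentence ``yields the asserted equivalences'' is false; what your computation actually proves is that the lemma's displayed equivalence must read $s_n(\Q)\simeq s_n(\S^{-1})^2\simeq s_n(j)^2$, equivalently $s_n(j)=s_n(\Q)^{1/2}\simeq s_n(\S^{-1})$. You are in good company: the paper's own proof makes the matching slip. It correctly shows $s_n(\Q)\simeq s_n(\cC)$ for $\cC:\V_*\to\H$, $\cC u:=\S^{-1}u$, but then asserts $s_n(\cC)=\mu_n$; in fact $\cC h_n=\mu_n^2e_n$ on the orthonormal basis $h_n:=\mu_ne_n$ of $\V_*$, so $s_n(\cC)=\mu_n^2$, and $\mu_n$ is instead the $n$-th singular value of the embedding $\V_*\hookrightarrow\H$, with which $\cC$ gets conflated there. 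None of this propagates into the rest of the paper, since Proposition \ref{counterex} uses only part (i); but a correct write-up must either prove the corrected statement or explicitly flag the error in the lemma, rather than assert that the printed equivalences follow from computations that in fact refute them.
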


\begin{proof} First, note that the operators $\T$ and $\Q$ are of the form:
\begin{align*}
 \T & = \S^{-1}(\Re A)^{-1}\Im A\, \S\quad \text{and} \\
 \Q & = \S^{-1}(\Re A)^{-1} \S^{-1}_| ,
\end{align*}
where $\Re A$ and $\Im A$ denote the real and the imaginary part of $A$, and $\S^{-1}_|$ is the restriction of $\S^{-1}\in \cL(\H)$ to $\V$, considered as an operator in $\cL (\V ,\H)$. These expressions give the first statements in (i) and (ii). The second assertion in (i) follows in a straightforward from, e.g., \cite[Theorem 7.7, p. 171]{Wei80}. \\

To prove the corresponding one of (ii), assume that $\S^{-1}$ is compact with spectrum $\sigma(\S^{-1})=:\{\mu_n\}_{n\in\N}$, where $\mu_n \rightarrow 0^+$ as $n\rightarrow \infty$. Therefore, there exists an orthonormal system $\{e_n\}_{n\in\N}$ in $\H$ such that $\S h =\sum_{n} \mu_n^{-1} \bracket{h}{e_n}_{\H} e_n$ for $h\in \cD(\S) = \{h\in \H: \sum_n \mu_n^{-2} |(h,e_n)|^2 < \infty\}$. 
Let $\cC :\V_* \rightarrow \H$, $\cC u:= \S^{-1}u$, $u \in \cD(\S)$, where 
$\V_*$ denote the Hilbert space $(\cD(\S), \bracket{\S\cdot}{\S\cdot}_\H)$. Of course, $\cC\in \cL(\V_*,\H)$ and $\cC^*\cC\in \cL(\V_\star)$ are compact. Moreover, note that 
\[
 \cC^*\cC u=\sum_n \mu_n^2 \bracket{u}{g_n}_{\V_\star} g_n,\quad \quad u\in \V_\star,
\]
where $g_n:=\mu_n^{-2}e_n$ $(n\in \N)$ is an orthonormal basis for $\V_*$.  
Thus, the singular values of $\cC$ are given by $s_n(\cC)=\mu_n$, $n\in \N$.

Now, let $I_\S$ denote the identity map on $\cD(\S)$ considered as an operator from $\V$ onto 
$\V_*$. Therefore, we have  $\S^{-1}_| = \cC I_\S$ and, by \cite[Theorem 7.1, p. 171]{Wei80}, we get 
\begin{align*}
s_n(\Q) &\leq \|\S^{-1} (\Re A)^{-1}\|_{\cL(\H)} s_n(\cC I_\S) = \|\S^{-1} (\Re A)^{-1}\|_{\cL(\H)} s_n(I_\S^* \cC^*)\\
  &\leq  \|\S^{-1} (\Re A)^{-1}\|_{\cL(\H)} \|I_\S^*\|_{\cL(\V_\star, \V)}  s_n(\cC^*) \\
  &\leq  \|\S^{-1} (\Re A)^{-1}\|_{\cL(\H)} \|I_\S^*\|_{\cL(\V_\star, \V)}  s_n(\cC) \quad \textrm{ and }\\
s_n(\cC) & = s_n( (\Re A) \S \Q I_\S^{-1}) \\
  & \leq  \|(\Re A) \S\|_{\cL(\V ,\H)} s_n(\Q I_\S^{-1})\\
  & \leq \|(\Re A) \S\|_{\cL(\V ,\H)} \|I_\S^{-1}\|_{\cL(\V_\star ,\V)} s_n(\Q).
\end{align*}
Finally, note that $s_n(\S^{-1})$ is equal to the $n$-th singular value of the embedding of $\V_*$ into $\H$. This completes the proof. 
\end{proof}

\begin{proof}[Proof of Proposition \ref{counterex}]
Suppose that $\H $, $\S$, $K$, $U$, $W$ have the properties specified above in
Lemma \ref{Kalton result}. Fix $m\in \N$ such that the numerical range of the operator $A:=I+2^{-m}WKU)$ is contained in $\{|z-1|<1\}$. Then, by Lemma \ref{AMN result}, the couple $(\a, \H )$ does not have the Kato property, where the form $\a$ corresponds to the operators $A$ and $\S$ as above.  
Moreover, by Lemma \ref{singular values}(i), 
\[ 
s_n(\T)\simeq s_n(\Im A) \simeq s_n\bigl(2^{-m}(WKU-U^*K^*W^*)\bigr)\preceq s_n(K) \quad (n\in \N).
\]
This completes the proof. 
\end{proof}

\nocite{Kat62}

\providecommand{\bysame}{\leavevmode\hbox to3em{\hrulefill}\thinspace}

\bibliographystyle{abbrv}
\bibliographystyle{amsplain}

\end{document}